\documentclass[10pt]{article}
 
\usepackage[margin=1in]{geometry} 
\usepackage{amsmath,amsthm,amssymb,mathtools}
\usepackage{color}
\usepackage[dvipsnames]{xcolor}
\usepackage{tikz}
\usepackage{lipsum}
\usepackage{fancyhdr}
\usepackage{hyperref}
\usepackage{cleveref}
\usepackage{diagbox}
\usepackage{float}
\usepackage{geometry,pdflscape}
\usepackage{thm-restate}
\newtheorem{theorem}{Theorem}[section]

\newtheorem{proposition}[theorem]{Proposition}

\newtheorem{lemma}[theorem]{Lemma}
\newtheorem{definition}[theorem]{Definition}

\newcommand{\C}[0]{\mathbb{C}}
\newcommand{\R}[0]{\mathbb{R}}

\newcommand{\speclin}[0]{\mathfrak{sl}_{r+1}(\mathbb{C})}

\newcommand{\hr}[0]{\Tilde{\alpha}}

\newcommand{\newmu}[0]{\alpha_I}

\newcommand{\thealtset}[0]{\mathcal{A}(\hr,\newmu)}
\newcommand{\qpartition}[1]{\wp_q\left(#1\right)}

\begin{document}
\title{
On the $q$-multiplicity of sums of distinct simple roots of $\mathfrak{sl}_{r+1}(\mathbb{C})$
}
\author{Matt McClinton}
\maketitle
\begin{abstract}
    In combinatorial representation theory, Kostant's weight multiplicity formula $m(\lambda,\mu)$ is a tool that provides a means of determining the multiplicity of a weight $\mu$ in the adjoint representation of a simple Lie algebra $\mathfrak{g}$, and in this work we consider the case of $\mathfrak{g}=\mathfrak{sl}_{r+1}(\mathbb{C})$. 
    In practice, performing calculations of Kostant's weight multiplicity formula is computationally intense, as the number of terms in this alternating sum grows factorially as the rank $r$ increases, of which most terms provide zero contribution to the overall sum. 
    In this work, we determine the Weyl alternation set, that is the terms in the alternating sum with nonzero contribution, for integral weights $\lambda$ the highest root of $\mathfrak{sl}_{r+1}(\mathbb{C})$, and $\mu$ any nonempty collection of distinct simple roots. We show that the alternation set is enumerated by a product of Fibonacci numbers, with the product being dependent on the choice of distinct simple roots. Then we compute the weight $q$-multiplicity for any nonempty collection of distinct simple roots.
\end{abstract}
\section{Introduction}
Let $[n]=\{1,2,\ldots,n\}$. Recall a Lie algebra is a vector space $\mathfrak{g}$ paired with a Lie bracket $[\cdot,\cdot]$ that is both skew symmetric and satisfies the Jacobi identity \cite{Goodman_Wallach_2009}. We consider the special linear Lie algebra 
\[\mathfrak{g}=\speclin=\{X\in M_{r+1}(\C):tr(X)=0\},\]
with Lie bracket $[X,Y]=XY-YX$ for any $X,Y\in \speclin$. 
For the standard basis vectors of $\R^{r+1}$, denoted  $e_1,e_2,\ldots,e_{r+1}$, we define $\alpha_i=e_i-e_{i+1}$ and refer to $\alpha_i$ as a simple root. We let $\Delta=\{\alpha_1,\alpha_2,\ldots,\alpha_r\}$ denote the set of simple roots and $\Phi^+=\Delta\cup \{\alpha_i+\alpha_{i+1}+\cdots+\alpha_j:1\leq i < j \leq r\}$ as the set of positive roots. We denote the positive roots for convenience as $\alpha_{i,j}=\alpha_i+\alpha_{i+1}+\cdots +\alpha_{j}$, with $\alpha_{i,i}=\alpha_i$. Denote $\hr=\alpha_{1,r}=\alpha_1+\alpha_2+\cdots +\alpha_r$, which we refer to as the highest root of $\speclin$. Additionally, let $\rho=\frac{1}{2}\sum_{\alpha\in \Phi^+}\alpha$ be one half the sum of the positive roots.

For each simple root $\alpha_i$, we pair it with a hyperplane $s_i$ orthogonal to $\alpha_i$. The Weyl group $W$ in rank $r$ is the group generated by the reflections through these hyperplanes $s_i$. It is known that for $\speclin$, the Weyl group in rank $r$ is isomorphic to symmetric group $\mathfrak{S}_{r+1}$. For any $\sigma\in W$, we denote $\ell(\sigma)=k$ as the length of $\sigma$, and specifically that $k$ is the smallest nonnegative integer such that $\sigma$ can be expressed as, possibly not uniquely, a product of $k$ reflections.
For simplicity, we assume throughout that any $\sigma\in W$ is stated as a reduced word expression.

In combinatorial representation theory, an area of active research 
is to determine the multiplicity of a weight $\mu$, a linear combination of simple roots with integer coefficients, in a highest weight representation of $\speclin$ with highest weight $\lambda$, see \cite{HarrisThesis, harry2024computingqmultiplicitypositiveroots, Harris_Lescinsky_Mabie_2018,visualizing}. A method of computing this multiplicity utilizes Kostant's weight multiplicity formula.
For weights $\lambda$ and $\mu$, \textbf{Kostant's weight multiplicity formula} is 
\begin{equation}
m(\lambda,\mu)=\sum_{\sigma\in W} (-1)^{\ell(\sigma)}\wp(\sigma(\hr+\rho)-\rho-\mu),\notag
\end{equation}
    where $\wp(\xi)$ is \textbf{Kostant's partition function}, which counts the number of ways to write $\xi$ as a nonnegative integral sum of positive roots \cite{Goodman_Wallach_2009}.
Note that for $\xi=c_1\alpha_1+c_2\alpha_2+\cdots+c_r\alpha_r$, we set $\wp(\xi)=0$ if any $c_i <0$, and we say $\wp(\xi)>0$ provided $c_i>0$ for every $i\in[r]$. 
Every element of the Weyl group corresponds to a term in the weight multiplicity. However, it is common for many terms to contribute a zero term to the sum. Since the Weyl group grows factorially as $r$ increases, the complexity of computing Kostant's weight multiplicity can be reduced significantly by first determining the terms that provide a nonzero contribution.
Given integral weights $\lambda$ and $\mu$, the \textbf{Weyl alternation set} is the set of elements in the Weyl group that correspond to terms in the weight multiplicity that provide nonzero contribution to the overall sum. We state it formally now:
    \begin{equation}\label{eq: defn of alternation set}
        \mathcal{A}(\lambda,\mu)=\{\sigma\in W:\wp(\sigma(\lambda+\rho)-\rho-\mu)>0\}.\notag
    \end{equation} 

In a fashion akin to generating functions, Lustzig \cite{Lustzig}
introduces a variant to Kostant's partition function and Kostant's weight multiplicity formula.
The \textbf{$q$-analog of Kostant's partition function} is the polynomial valued function 
    \begin{equation}
        \wp_q(\xi)=c_0+c_1q^1+\cdots+c_nq^n,\notag
    \end{equation}
    where $c_i$ equals the number of ways to write $\xi$ as a sum of exactly $i$ positive roots. The \textbf{$q$-analog of Kostant's weight multiplicity formula} is given by 
    \begin{equation}\label{eq: defn of q-analog}
        m_q(\lambda,\mu)=\sum_{\sigma\in W}(-1)^{\ell(\sigma)}\wp_q(\sigma(\hr+\rho)-\rho-\mu).\notag
    \end{equation}
Observe that if $\wp_q(\xi)=c_0+c_1q^1+\cdots+c_nq^n$, by setting $q=1$, we recover the original partition function $\wp(\xi)$ and, therefore, also recover $m(\lambda,\mu)$ from $m_q(\lambda,\mu)$.

Determining Weyl alternation sets and weight $q$-multiplicities for certain pairs of weights is an active area of research. We provide some results in the literature that we utilize throughout. 
Harris showed in \cite{HarrisThesis} that in rank $r$, when $\lambda$ is the highest root and $\mu=0$, then 
\begin{align}
    \mathcal{A}(\hr,0) = \left\{\sigma\in W: \sigma=\prod_{i\in\mathcal{I}}s_i\right\},\label{pamela alternation result}
\end{align}
where $\mathcal{I}$ is a (possibly empty) set of nonconsecutive integers in the interval $[2,r-1]$. Harris also showed that the weight $q$-multiplicity $m_q(\hr,0)$ reduces to a sum of powers of $q$:
 
\begin{equation}
    m_q(\hr,0)=\sum_{t=1}^r q^t.\label{eq: pamela result}
\end{equation}
Harry showed in \cite{harry2024computingqmultiplicitypositiveroots} that in rank $r$, when $\lambda$ the highest root and $\mu$ a positive root $\alpha_{i,j}$ with $1\leq i\leq j \leq r$, then 
\begin{equation}
    \mathcal{A}(\hr,\alpha_{i,j}) = \left\{\sigma\in W: \sigma=\prod_{i\in \mathcal{J}}s_i\right\}\label{kim alternation result},
\end{equation}
where $\mathcal{J}$ is a (possibly empty) collection of nonconsecutive integers in $[2,i-1]\sqcup [j+1,r-1]$. Additionally, Harry showed that the weight $q$-multiplicity reduces to a power of $q$, which is dependent on the choice of positive root $\alpha_{i,j}$: 
 
\begin{equation}
    m_q(\hr,\alpha_{i,j})=q^{r-j+i-1}.\label{kim result}
\end{equation}

Our main results extend the work of Harris and Harry, and answer a question of Paul Terwilliger, who asked what is the $q$-multiplicity when $\mu$  is a sum of distinct simple roots\footnote{Paul Terwilliger posed this question to Alex Wilson, who shared the question with the author.}. Throughout, we fix $\lambda=\hr=\alpha_1+\alpha_2+\cdots +\alpha_r$. 

Let $I\subseteq [r]$ be nonempty. 
We define the \textit{interval partition} of $I$ to be the partition of $I$ formed by taking the disjoint union of maximal length intervals of consecutive integers, which we define formally now.
\begin{definition}[Interval partition of the indexing set $I$]\label{def: interval partition}
    Let $I$ be any nonempty subset of $[r]$.  The \textbf{interval partition} of $I$, denoted as $\bigsqcup_{x=1}^{n(I)}[i_x,j_x]$, is a disjoint union of $n(I)$ intervals of consecutive integers, where each interval is selected to have maximal length.
    We impose the conditions that $1\leq i_1 \leq j_1 < i_2 \leq j_2 < \cdots < i_{n(I)} \leq j_{n(I)} \leq r$, and, for each $x\in [n(I)-1]$, we have $j_x\leq i_{x+1}+2$.     
    Furthermore, as usual, $|I|$ denotes the number of integers contained in $I$, and we denote the complement of $I$ as $I^c=[r]\setminus I$.
\end{definition}

For a nonempty $I\subseteq [r]$ with interval partition $\bigsqcup_{x=1}^{n(I)}[i_x,j_x]$, we define 
\begin{equation}\label{eq: defn of alpha_I}
    \alpha_I=\sum_{i\in I}\alpha_i=\sum_{x=1}^{n(I)}\alpha_{i_x,j_x}.\notag
\end{equation} 
Observe that every sum of distinct simple roots  can be written in the form $\sum_{i\in I}\alpha_i$ for some subset $I\subseteq [r]$, which, in turn, has a corresponding expression $\sum_{x=1}^{n(I)}\alpha_{i_x,j_x}$. Therefore to extend the work of Harris and Harry, we compute the Weyl alternation sets $\mathcal{A}(\hr,\alpha_I)$ for any nonempty $I\subseteq[r]$. 

An interesting detail of these aforementioned Weyl alternation sets is their enumeration. Recall that choosing nonconsecutive integers in $[n]$ is enumerated by $F_{n+2}$, where $F_i$ denotes the $i$-th Fibonacci number (see \Cref{prop: count is Fibonacci}).
\begin{definition}
    For any $n\geq 3$, define the \textbf{Fibonacci number}, denoted $F_n$, as the recursive formula:
    \begin{equation}
        F_n = F_{n-1}+F_{n-2}\notag
    \end{equation}
    with $F_1=F_2=1$.
\end{definition}
Consequently, $\mathcal{A}(\hr,0)$ is enumerated by a Fibonacci number \cite{HarrisThesis}, and $\mathcal{A}(\hr,\alpha_{i,j})$ is enumerated by a product of two Fibonacci numbers \cite{harry2024computingqmultiplicitypositiveroots}, dependent on the choice of positive root $\alpha_{i,j}$.
In \Cref{section: 2 Weyl alt sets}, we show that when $\lambda$ is the highest root and $\mu=\alpha_I$ for some nonempty $I\subseteq [r]$, the Weyl alternation sets are enumerated by products of Fibonacci numbers.
Additionally, we show that the products of Fibonacci numbers are determined exactly by the indexing set $I$.
\begin{restatable*}[]{corollary}{corr}\label{size of the alt set}
    Let $\hr$ be the highest root in $\speclin$ and let $I\subseteq [r]$ be nonempty with interval partition $I=\bigsqcup_{x=1}^{n(I)}[i_x,j_x]$. Then
    \begin{align}
        |\mathcal{A}(\hr,\alpha_I)|=\prod_{x=0}^{n(I)+1}F_{\ell_x},\notag
    \end{align}
    where $F_n$ denotes the $n$-th Fibonacci number, $\ell_0=i_1$, $\ell_{n(I)+1}=r-j_{n(I)}+1$, and for each $x\in[n(I)]$, $\ell_x=i_{x+1}-j_x+1$.
\end{restatable*}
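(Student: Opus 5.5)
The plan is to obtain the count from an explicit description of $\thealtset$ and then count that description with \Cref{prop: count is Fibonacci}. The description I expect, extending \eqref{pamela alternation result} and \eqref{kim alternation result}, is
\[
\thealtset=\Big\{\prod_{k\in\mathcal{J}}s_k:\ \mathcal{J}\subseteq [2,r-1]\setminus I \text{ a set of nonconsecutive integers}\Big\}.
\]
If this characterization is already proved in \Cref{section: 2 Weyl alt sets}, I would simply invoke it. Otherwise I would prove it in three steps.

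\textbf{Step 1: reduce to Harris's set.} Since $\newmu$ has nonnegative coefficients, $\wp(\sigma(\hr+\rho)-\rho-\newmu)>0$ implies $\wp(\sigma(\hr+\rho)-\rho)>0$. Hence $\thealtset\subseteq\mathcal{A}(\hr,0)$, and by \eqref{pamela alternation result} every $\sigma\in\thealtset$ has the form $\sigma=\prod_{k\in\mathcal{J}}s_k$ with $\mathcal{J}\subseteq[2,r-1]$ nonconsecutive.

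\textbf{Step 2: compute $\sigma(\hr+\rho)-\rho$.} For such $\sigma$ the reflections $s_k$ commute. For $2\le k\le r-1$ we have $\langle \hr,\alpha_k^\vee\rangle=0$ and $\langle\rho,\alpha_k^\vee\rangle=1$. Since $\mathcal{J}$ is nonconsecutive, the same holds after applying the other reflections, so each $s_k$ subtracts exactly $\alpha_k$. This gives
\[
\sigma(\hr+\rho)-\rho-\newmu=\sum_{k=1}^r\big(1-[k\in\mathcal{J}]-[k\in I]\big)\alpha_k .
\]

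\textbf{Step 3: test positivity.} The partition function of a vector with nonnegative coefficients is at least $1$, because it can be written as a sum of simple roots. It is $0$ if some coefficient is negative. So $\sigma\in\thealtset$ exactly when $\mathcal{J}\cap I=\emptyset$.

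\textbf{Counting.} The allowed index set $[2,r-1]\setminus I$ splits as the disjoint union of the gap intervals
\[
[2,i_1-1],\qquad [j_x+1,\,i_{x+1}-1]\ \ (x\in[n(I)-1]),\qquad [j_{n(I)}+1,\,r-1].
\]
Consecutive gaps are separated by at least one element of $I$. Therefore a subset of their union is nonconsecutive if and only if its restriction to each gap is nonconsecutive, and choosing $\mathcal{J}$ is the same as independently choosing a nonconsecutive subset in each gap. A gap of $m\ge 0$ integers contributes $F_{m+2}$ choices by \Cref{prop: count is Fibonacci}. This gives the factors:
\begin{itemize}
\item the first gap has $i_1-2$ integers, giving $F_{i_1}=F_{\ell_0}$;
\item the middle gaps have $i_{x+1}-j_x-1$ integers, giving $F_{i_{x+1}-j_x+1}=F_{\ell_x}$;
\item the last gap has $r-j_{n(I)}-1$ integers, giving $F_{r-j_{n(I)}+1}=F_{\ell_{n(I)+1}}$.
\end{itemize}
In the boundary cases $i_1=1$ or $j_{n(I)}=r$, the corresponding gap is empty and the formula gives $F_1=1$, which is the correct count. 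I read $\ell_x$ as defined for $x\in[n(I)-1]$, since $i_{n(I)+1}$ does not exist. Finally, distinct sets $\mathcal{J}$ give distinct Weyl group elements, because products of distinct commuting simple reflections are distinct permutations. Multiplying the factors gives the stated product.

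\textbf{Expected obstacle.} The hardest part is the characterization, specifically Step 2: checking that the commuting reflections subtract exactly $\alpha_k$ each. This uses nonconsecutiveness, since $s_k$ fixes $\alpha_{k'}$'s contribution when $|k-k'|\ge2$. It also uses that the endpoints $1$ and $r$ are already excluded by Harris's result. Once this is settled, the enumeration is a routine product of independent Fibonacci counts.
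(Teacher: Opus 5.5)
Your proposal is correct, and its counting half is the paper's argument. You invoke \Cref{thm:whats in the alt set}, split the admissible indices $I^c\setminus\{1,r\}$ into gaps, and multiply the per-gap counts from \Cref{prop: count is Fibonacci}.

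Where you genuinely differ is your self-contained derivation of the characterization. The paper proves \Cref{thm:whats in the alt set} locally, in three pieces:
\begin{itemize}
\item it excludes single reflections $s_i$ with $i\in I\cup\{1,r\}$ (\Cref{prop: no s1 sr});
\item it excludes adjacent pairs $s_is_{i+1}$ and $s_{i+1}s_i$ (\Cref{lemma:consecutivegeneral});
\item it then needs a subword-closure property of alternation sets to pass from these short words to an arbitrary $\sigma$.
\end{itemize}
The closure result recalled at the start of \Cref{section: 2 Weyl alt sets} is stated only for $\mu$ a positive root. Once $n(I)\ge 2$, $\newmu$ is not a positive root.

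Your route avoids this. Adding the simple roots of $\newmu$ to any partition of $\sigma(\hr+\rho)-\rho-\newmu$ gives a partition of $\sigma(\hr+\rho)-\rho$, so $\thealtset\subseteq\mathcal{A}(\hr,0)$. This imports the whole structural restriction from \eqref{pamela alternation result} at once. You need neither subword closure nor the length-two computations, and membership reduces to the single coefficient condition $\mathcal{J}\cap I=\emptyset$. The paper's local route has the merit of not presupposing a known superset of the alternation set. Here, though, yours is shorter and closes that gap.

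Your bookkeeping is also sharper than the paper's on three points:
\begin{itemize}
\item the empty boundary gaps when $i_1=1$ or $j_{n(I)}=r$;
\item the injectivity of $\mathcal{J}\mapsto\prod_{k\in\mathcal{J}}s_k$;
\item the index range of $\ell_x$.
\end{itemize}
On the last point: as written, $\prod_{x=0}^{n(I)+1}F_{\ell_x}$ contains the undefined factor $F_{\ell_{n(I)}}$, and the paper's proof repeats this off-by-one with $x\in[n(I)]$. Your reading, with $n(I)+1$ factors and middle gaps indexed by $x\in[n(I)-1]$, gives the correct count.
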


In \Cref{section: 3 q-analogs of 101}, we prove a special case of $m_q(\hr,\alpha_I)$ when $\alpha_I=\alpha_{1,i}+\alpha_{i+j+1,r}$, provided that $i\in[r-2]$ and $j\in[r-i-1]$, then $m_q(\hr,\alpha_I)$ is a difference of powers of $q$. 
\begin{restatable*}[]{theorem}{difference}\label{thm: difference of q's}
    Let $\hr$ be the highest root in $\speclin$. 
    Fix  $i \in [r-2]$ and let $j\in[r-i-1]$.  
    Then
    \begin{equation}
        m_q(\hr,\alpha_{1,i}+\alpha_{i+j+1,r})=q^j-q^{j-1}.\notag
    \end{equation}
\end{restatable*}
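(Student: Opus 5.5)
The plan is to reduce the alternating sum to a single signed sum over nonconsecutive subsets of a window of length $j$, and then evaluate that sum by induction on $j$.

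\textbf{Step 1: the alternation set.} Write $\mu=\alpha_{1,i}+\alpha_{i+j+1,r}$. Here $I=[1,i]\sqcup[i+j+1,r]$, so $n(I)=2$, with $\ell_0=1$, $\ell_1=j+2$ and $\ell_2=1$. The corollary on $|\mathcal{A}(\hr,\alpha_I)|$ then gives $|\mathcal{A}(\hr,\mu)|=F_{j+2}$. I will use the explicit description of $\mathcal{A}(\hr,\alpha_I)$ from \Cref{section: 2 Weyl alt sets}, which extends \eqref{pamela alternation result} and \eqref{kim alternation result}. It should say that $\mathcal{A}(\hr,\mu)$ consists of the products $\sigma_K=\prod_{k\in K}s_k$, where $K$ runs over sets of nonconsecutive integers in $[i+1,i+j]$. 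The count matches, since nonconsecutive subsets of a $j$-element interval number $F_{j+2}$ by \Cref{prop: count is Fibonacci}.

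\textbf{Step 2: evaluate each term.} Because $i\ge 1$ and $i+j\le r-1$, every $k\in K$ lies in $[2,r-1]$. Hence $s_k(\hr)=\hr$. The simple reflections $s_k$, $k\in K$, commute because the indices are nonconsecutive, and $\sigma_K(\rho)-\rho=-\sum_{k\in K}\alpha_k$. Together with $\hr-\mu=\alpha_{i+1,i+j}$, this gives
\[
\sigma_K(\hr+\rho)-\rho-\mu=\alpha_{i+1,i+j}-\sum_{k\in K}\alpha_k .
\]
The right-hand side is a sum of the root strings $\alpha_{a,b}$ over the maximal runs of $[i+1,i+j]\setminus K$. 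A partition of such a vector cannot use a positive root that crosses a removed index. So the $q$-partition function factors over the runs:
\[
\wp_q\Big(\sum_{\text{runs}}\alpha_{a,b}\Big)=\prod_{\text{runs}}\wp_q(\alpha_{a,b}).
\]
Each factor is $\wp_q(\alpha_{a,b})=q(1+q)^{m-1}$, where $m=b-a+1$: a partition of $\alpha_{a,b}$ is a composition of a path of length $m$, and using $t$ parts contributes $\binom{m-1}{t-1}q^t$. Also $\wp_q(0)=1$. Since $\ell(\sigma_K)=|K|$, we obtain
\[
m_q(\hr,\mu)=g(j):=\sum_{K}(-1)^{|K|}\prod_{\text{runs of }[j]\setminus K} q(1+q)^{m-1},
\]
where $K$ ranges over nonconsecutive subsets of $[j]$.

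\textbf{Step 3: compute $g(j)=q^j-q^{j-1}$ by induction.} The base case $j=1$ is $q-1$. For the inductive step I will condition on the last position $j$.
\begin{itemize}
\item If $j\in K$, then $j-1\notin K$, and we get $-h(j-1)$. Here $h(t)$ denotes the analogous sum on $[t]$ in which $t$ is forced to lie outside $K$.
\item If $j\notin K$, we get $h(j)$.
\end{itemize}
The quantity $h$ in turn satisfies a recurrence: either the final run has length exactly one more than in $h(t-1)$, which multiplies by $(1+q)$, or position $t-1$ is in $K$. I will set up this pair of linear recurrences for $(g,h)$ and verify that $g(j)=q^j-q^{j-1}$ and $h(j)=q^j$ satisfy them together with the initial values. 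The check for $h$ is $h(t)=(1+q)h(t-1)-q\,h(t-2)$, whose solution is $q^t$ from $h(0)=1$ and $h(1)=q$. An alternative is a transfer-matrix or generating-function computation, in which the alternating sum telescopes.

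The main obstacle is this last combinatorial identity: getting the bookkeeping of the recurrence exactly right, in particular the boundary runs of length zero with weight $1$. A secondary concern is confirming the precise form of the alternation set from \Cref{section: 2 Weyl alt sets} in this configuration, so that no reflections outside $[i+1,i+j]$ contribute.
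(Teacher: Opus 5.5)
Your proposal is correct, but it evaluates the signed sum by a different route than the paper. Both arguments start from the alternation set of \Cref{thm:whats in the alt set} and the term $(-1)^{|K|}\wp_q(\alpha_{i+1,i+j}-\sum_{k\in K}\alpha_k)$.

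The paper never evaluates these terms individually. It splits the alternation set into four classes according to whether $s_{i+1}$ and $s_{i+j}$ occur. After an index shift, it matches each class term by term with Harris's $m_q(\hr_t,0)$ for $t=j,j-1,j-1,j-2$. Writing $\Sigma_t=\sum_{s=1}^t q^s$, \eqref{eq: pamela result} makes the total $\Sigma_j-2\Sigma_{j-1}+\Sigma_{j-2}$ telescope to $q^j-q^{j-1}$. The cases $j=1,2$ are checked by hand.

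You instead evaluate the terms explicitly, using the run factorization and $\wp_q(\alpha_{a,b})=q(1+q)^{m-1}$. You condition only on the right endpoint, which gives $g(j)=h(j)-h(j-1)$. You then prove $h(t)=q^t$ from $h(t)=(1+q)h(t-1)-q\,h(t-2)$ with $h(0)=1$ and $h(1)=q$.

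The bookkeeping you were worried about checks out. The case $t-1\in K$ contributes exactly $-q\,h(t-2)$: a sign $-1$, a singleton final run of weight $q$, and $h(t-2)$. The convention $h(0)=1$ correctly handles both $t=2$ and $g(1)=q-1$.

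After shifting, your $h(t)$ is exactly $m_q(\hr_{t+1},\alpha_1)$. So your argument is a one-endpoint version of the paper's two-endpoint inclusion--exclusion. It re-proves the $\alpha_1$ case of \eqref{kim result} instead of citing \eqref{eq: pamela result}.

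What your route gains is a self-contained proof, uniform in $j\ge 1$, that also explains the shape of the answer. The characteristic roots are $1$ and $q$, and the initial values kill the constant solution. The same $h$ also gives the edge factors $q^{i_1-1}$ and $q^{r-j_{n(I)}}$ used later for \Cref{thm: the big one}. The paper's route trades that recurrence for reuse of an established result.
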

Lastly, in \Cref{section: 4 main results}, by utilizing \Cref{kim result} and \Cref{thm: difference of q's} 
, we provide a method of factoring $m_q(\hr,\alpha_I)$ into a product of weight $q$-multiplicities to determine the weight $q$-multiplicity $m_q(\hr,\alpha_I)$ for any nonempty $I\subseteq [r]$.
\begin{restatable*}[]{corollary}{THEBIGONE}\label{thm: the big one}
    Let $\hr$ be the highest root in $\speclin$, and let $I\subseteq[r]$ be nonempty.
    Then
    \begin{equation}
        m_q(\hr,\alpha_I)=(q-1)^{n(I)-1}\cdot q^{r-|I|-n(I)+1}.\label{eq: KWMF final poly}
    \end{equation}
\end{restatable*}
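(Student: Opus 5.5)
The plan is to reduce $m_q(\hr,\alpha_I)$ to a product of independent local factors, one for each gap of $I$. The factors at the two ends of $[r]$ will be read off from \Cref{kim result}, and the interior factors from \Cref{thm: difference of q's}. As a sanity check, the right-hand side of \eqref{eq: KWMF final poly} specializes correctly. When $n(I)=1$ and $I=[i,j]$ it gives $q^{r-j+i-1}$, matching \Cref{kim result}. When $I=[1,i]\sqcup[i+j+1,r]$ we have $n(I)=2$ and $|I|=r-j$, so it gives $(q-1)q^{j-1}=q^j-q^{j-1}$, matching \Cref{thm: difference of q's}.

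\textbf{Step 1: terms of the sum.} From \Cref{section: 2 Weyl alt sets}, whose count is \Cref{size of the alt set}, I will use that every $\sigma\in\thealtset$ has the form $\sigma=\prod_{k\in K}s_k$. Here $K$ is a set of pairwise nonconsecutive integers lying in $I^c\cap[2,r-1]$, and each $k\in K$ sits inside one of the gaps $[1,i_1-1]$, $[j_x+1,i_{x+1}-1]$, $[j_{n(I)}+1,r]$. I am taking this form directly from that section's characterization; the corollary only records the count. For such $\sigma$ the $s_k$ commute, $s_k(\rho)=\rho-\alpha_k$, and $s_k(\hr)=\hr$ for $2\le k\le r-1$. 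Hence
\[
\sigma(\hr+\rho)-\rho-\alpha_I=\sum_{t\in [r]\setminus(I\cup K)}\alpha_t ,
\]
which is a $0/1$ combination of simple roots, and $\ell(\sigma)=|K|$.

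\textbf{Step 2: factorization.} A positive root $\alpha_{a,b}$ has support the interval $[a,b]$. It therefore cannot appear in a partition of a vector whose coefficient vanishes somewhere in $[a,b]$. Consequently $\wp_q$ of a $0/1$ vector equals the product of $\wp_q$ over its maximal blocks of $1$'s. Every element of $I$ carries coefficient $0$, so these blocks never straddle an element of $I$; each block lies inside a single gap. The choice of $K$ also splits independently across the gaps. The alternating sum therefore factors as
\[
m_q(\hr,\alpha_I)=E_{\mathrm{left}}\cdot\prod_{x=1}^{n(I)-1}M_x\cdot E_{\mathrm{right}} .
\]
Each factor is a signed sum over admissible nonconsecutive $K$ inside its gap of the product of $\wp_q$ over the blocks of that gap minus $K$. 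Crucially, each factor depends only on the length of its gap and on whether the gap touches $1$ or $r$ (end gaps forbid $k=1$ or $k=r$).

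\textbf{Step 3: identifying the factors.} Applying the same factorization to $I=[i,j]$ and comparing with \Cref{kim result} gives $E(i-1)E(r-j)=q^{(i-1)+(r-j)}$ for all $i\le j$. Taking $i=1$ and using $E(0)=1$ (an empty gap contributes the empty product) forces $E(m)=q^m$. Applying the factorization to $I=[1,i]\sqcup[i+j+1,r]$, where both end gaps are empty, \Cref{thm: difference of q's} gives $M(j)=q^j-q^{j-1}=(q-1)q^{j-1}$ for an interior gap of length $j$. The interior factor $M$ depends only on gap length, and every length $j\ge1$ is realized for suitable $r$. Since the factors are defined intrinsically, this identification is valid for all $r$.

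Multiplying, the total gap length is $r-|I|$ and there are $n(I)-1$ interior gaps, so
\[
m_q(\hr,\alpha_I)=(q-1)^{n(I)-1}q^{\,r-|I|-(n(I)-1)},
\]
as claimed.

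The main obstacle is Step 2. I must check carefully that the alternation-set description really makes $K$ range over a product of independent choices, one per gap. In particular, adjacency between an element of $K$ and an endpoint of $I$ must impose no cross-gap constraint. If that turns out to be delicate, I would instead compute $M(j)$ directly by a Fibonacci-type recursion on $j$, using $\wp_q(\alpha_{a,b})=q(1+q)^{b-a}$.
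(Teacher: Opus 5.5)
Your proof is correct. Its first two steps match the paper's. The paper also uses the characterization in \Cref{thm:whats in the alt set} and then factors $m_q(\hr,\alpha_I)$ over the intervals of $I^c$ (\Cref{lemma: mu a sum of positive roots turns KWMF into a product}).

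The routes differ in how the local factors are identified. The paper, in \Cref{THE BEST THEOREM EVER}, matches each local sum term by term, after shifting indices, with a lower-rank multiplicity:
\begin{itemize}
\item $m_q(\hr_{i_1},\alpha_{i_1})$ for the left gap;
\item $m_q(\hr_{g+2},\alpha_1+\alpha_{g+2})$ for an interior gap of length $g$;
\item $m_q(\hr_{m+1},\alpha_1)$ for a right gap of length $m$.
\end{itemize}
It then runs a four-way case split on whether $1$ and $r$ lie in $I$.

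You instead note that the local factors are intrinsic functions $E(m)$ and $M(g)$ of gap length and type. You then solve for them by applying the factorization to $I=[i,j]$ and $I=[1,i]\sqcup[i+j+1,r]$, whose totals are known from \Cref{kim result} and \Cref{thm: difference of q's}. The paper's comparison cases are members of these same two families, placed in minimal rank. So both arguments rest on the same inputs. What your route buys is economy: the factorization lemma itself performs the identification, so you need no term-by-term bijections. The convention $E(0)=1$ also handles all four endpoint configurations at once.

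Two small remarks.
\begin{itemize}
\item The ``main obstacle'' you flag is immediate. Elements of $K$ lying in different gaps are separated by an element of $I$, so they differ by at least $2$, and the nonconsecutiveness condition never couples gaps.
\item You use a single function $E$ for both end gaps, but a priori you have $E_{\mathrm{left}}(i-1)\,E_{\mathrm{right}}(r-j)=q^{(i-1)+(r-j)}$. Setting $i=1$ only determines $E_{\mathrm{right}}$. Either also set $j=r$, or observe that reversing a gap swaps which endpoint is excluded from $K$ while preserving $\wp_q$ of a $0/1$ vector, so that $E_{\mathrm{left}}=E_{\mathrm{right}}$.
\end{itemize}
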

\section{Weyl Alternation Sets and Products of Fibonacci numbers}\label{section: 2 Weyl alt sets}
In this section, we determine the Weyl alternation sets $\thealtset$ for any nonempty $I\subseteq [r]$. To begin, we recall a result from
\cite{anderson2025supportkostantsweightmultiplicity} (see Corollary 3.2)
which proves that when $\lambda$ is the highest root $\hr$ and $\mu$ is a positive root, if $\sigma\in \mathcal{A}(\hr,\mu)$ and $\sigma'$ is a reduced subword contained in a reduced word expression of $\sigma$, then $\sigma'\in\mathcal{A}(\hr,\mu)$. That is to say, to determine which elements of the Weyl group are contained in the Weyl alternation set $\mathcal{A}(\hr,\alpha_I)$, we first determine which simple reflections $s_i$ are contained in the alternation set, as any element of the alternation set will be a product of these simple reflections. Then we prove some properties pertaining to the structure of these Weyl alternation set elements which help in our characterization. 
Now we show that, for a given index set $I\subseteq[r]$, every simple reflection whose index is contained in $I\cup \{1,r\}$ does not exist in the alternation set.
\begin{proposition}\label{prop: no s1 sr}
    Let $\hr$ be the highest root of $\speclin$, and let $I\subseteq [r]$ be nonempty. 
    If $i\in I\cup\{1,r\}$, then $s_i\notin\mathcal{A}(\hr,\alpha_I)$. 
    Otherwise $s_i\in\thealtset$.
\end{proposition}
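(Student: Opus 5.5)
Direct computation: for each simple reflection $s_i$ we work out the vector $s_i(\hr+\rho)-\rho-\alpha_I$ explicitly in the basis of simple roots and check whether all of its coefficients are nonnegative. The standard facts needed are $s_i(\rho)=\rho-\alpha_i$ and $s_i(\beta)=\beta-\langle\beta,\alpha_i^\vee\rangle\alpha_i$. Applied to $\hr=\alpha_{1,r}$, these give $\langle\hr,\alpha_i^\vee\rangle=0$ for $2\le i\le r-1$. They also give $\langle\hr,\alpha_1^\vee\rangle=\langle\hr,\alpha_r^\vee\rangle=1$ when $r\ge 2$; in rank $1$ the pairing is $2$, and this case is handled the same way. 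Hence
\[
s_i(\hr+\rho)-\rho-\alpha_I=\hr-\langle\hr,\alpha_i^\vee\rangle\alpha_i-\alpha_i-\alpha_I .
\]

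For $i\in\{1,r\}$ with $r\geq 2$, the expression becomes $\hr-2\alpha_i-\alpha_I$. The coefficient of $\alpha_i$ in it is $1-2-[i\in I]\le -1<0$, so $\wp=0$ and $s_i\notin\thealtset$. If $r=1$, then $I=\{1\}$ and the coefficient of $\alpha_1$ is $1-3-1<0$, with the same conclusion.

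For $2\le i\le r-1$, the expression becomes $\hr-\alpha_i-\alpha_I$.
\begin{itemize}
\item If $i\in I$, the coefficient of $\alpha_i$ is $1-1-1=-1$, so again $\wp=0$ and $s_i\notin\thealtset$.
\item If $i\notin I$ (which is the "otherwise" case once $i\notin I\cup\{1,r\}$), the vector equals $\sum_{k\in [r]\setminus(I\cup\{i\})}\alpha_k$. Every coefficient is $0$ or $1$, so the vector is a nonnegative combination. It is then trivially a sum of simple roots (positive roots), giving $\wp\ge 1$.
\end{itemize}

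The main subtlety is reconciling this last step with the paper's convention that $\wp(\xi)>0$ is stated in terms of the coefficients. In the intended sense, membership in the alternation set just means the partition function is nonzero, and a nonnegative integral combination of simple roots always has at least one partition, namely the sum of its simple roots with multiplicity. If the vector is $0$, which happens when $I\cup\{i\}=[r]$, then $\wp(0)=1$ and the conclusion still holds. I expect no real obstacle beyond carefully verifying $s_i(\rho)=\rho-\alpha_i$ and the pairings $\langle\hr,\alpha_i^\vee\rangle$ in type $A$, which follow from the Cartan matrix: $\langle\alpha_{1,r},\alpha_i^\vee\rangle$ is $2$ minus the number of neighbors of $i$ that lie in $[1,r]$.
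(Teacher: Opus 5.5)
Your proposal is correct and takes the same approach as the paper. Both compute $s_i(\hr+\rho)-\rho-\alpha_I$ explicitly and read off the coefficient of $\alpha_i$, which is negative when $i\in I\cup\{1,r\}$, while otherwise every coefficient lies in $\{0,1\}$. Your coroot-pairing derivation, the separate treatment of rank $1$, and the note that $\wp(0)=1$ only tidy up that computation, which itself relies on nonnegativity of the coefficients rather than the stated ``all $c_i>0$'' convention.
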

\begin{proof}
    Fix a nonempty $I\subseteq[r]$. To start, we show explicitly that neither $s_1$ nor $s_r$ are contained in the Weyl alternation set $\thealtset$. To this end, we compute
    \begin{align}
        s_1(\hr+\rho)-\rho-\alpha_I &= -\alpha_1+\alpha_2+\cdots+\alpha_r-\alpha_I\label{eq: apply s1}, \\
        \intertext{and}
        s_r(\hr+\rho)-\rho-\alpha_I &= \alpha_1+\alpha_2+\cdots+\alpha_{r-1}-\alpha_r-\alpha_I\label{eq: apply sr}.
    \end{align}
    In \Cref{eq: apply s1} 
    the coefficient of $\alpha_1$ is negative, and in  \Cref{eq: apply sr}, the coefficient of $\alpha_r$ is negative, hence $s_1,s_r\notin\mathcal{A}(\hr,\alpha_I)$.
    Consider $s_i$, where $i\in I\setminus \{1,r\}$. 
    We compute
    \begin{equation}
        s_i(\hr+\rho)-\rho-\alpha_I = \big(\alpha_1+\alpha_2+\ldots+\alpha_{i-1}+\alpha_i+\alpha_{i+1}+\cdots+\alpha_r\big)-\alpha_i-\alpha_I\label{eq: si acting on hr + rho}.
    \end{equation}
    Recall $I$ has interval partition $I=\bigsqcup_{x=1}^{n(I)}[i_x,j_x]$.
    If $i\in[i_x,j_x]$ for some $x$, then $\alpha_I=\sum_{x=1}^{n(I)}\alpha_{i_x,j_x}$ contains the term $\alpha_i$. More specifically, in \Cref{eq: si acting on hr + rho}, the $\alpha_i$ term has a coefficient of $-1$, and hence $s_i\notin\mathcal{A}(\hr,\alpha_I)$. 
    Now suppose that $i\notin I\cup\{1,r\}$, then each term in the expression $-\alpha_i-\alpha_I$ in \Cref{eq: si acting on hr + rho} all have distinct indices. 
    Therefore, the right hand side of \Cref{eq: si acting on hr + rho} is a nonnegative integral sum of positive roots, and hence $s_i\in \thealtset$ when $i\notin I$, which completes the proof.
\end{proof}
Next we recall a result that we utilize throughout. 
\begin{lemma}[Proposition 3.1.2 in \cite{HarrisThesis}]\label{lemma: spit out the alphas}
    Let $\hr$ be the highest root in $\speclin$. If $\sigma=s_{x_1}s_{x_2}\cdots s_{x_m}$, where $x_1,x_2,\ldots,x_m$ are nonconsecutive integers in the interval $[2,r-1]$, then 
    \[\sigma(\hr+\rho)=\hr+\rho - \sum_{i=1}^m \alpha_{x_i}.\]
\end{lemma}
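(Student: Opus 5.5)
The statement to prove is the lemma quoted from Harris: if $x_1,\dots,x_m$ are pairwise nonconsecutive integers in $[2,r-1]$, then
\[
\sigma(\hr+\rho)=\hr+\rho-\sum_{i=1}^m\alpha_{x_i}.
\]
The plan is to induct on $m$, using the action of a simple reflection on a weight,
\[
s_k(v)=v-\langle v,\alpha_k^\vee\rangle\alpha_k .
\]
In type $A$ all roots have the same length, so $\langle v,\alpha_k^\vee\rangle=\langle v,\alpha_k\rangle$ under the normalization $\langle\alpha_k,\alpha_k\rangle=2$.

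\textbf{Base case.} First record two standard facts.
\begin{itemize}
\item $\langle\rho,\alpha_k^\vee\rangle=1$ for every simple root $\alpha_k$.
\item $\langle\hr,\alpha_k^\vee\rangle=\sum_{t=1}^r\langle\alpha_t,\alpha_k^\vee\rangle$. For $2\le k\le r-1$ this equals $-1+2-1=0$. For $k=1$ or $k=r$ it equals $1$.
\end{itemize}
Hence for $2\le k\le r-1$ we get $\langle\hr+\rho,\alpha_k^\vee\rangle=1$, and therefore
\[
s_k(\hr+\rho)=\hr+\rho-\alpha_k .
\]
This is the case $m=1$.

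\textbf{Inductive step.} Write $\sigma=s_{x_1}\sigma'$ with $\sigma'=s_{x_2}\cdots s_{x_m}$. By induction,
\[
\sigma'(\hr+\rho)=\hr+\rho-\sum_{i\ge2}\alpha_{x_i}.
\]
Apply $s_{x_1}$ to this. We need the pairing of $\alpha_{x_1}^\vee$ with each term.
\begin{itemize}
\item $\langle\hr+\rho,\alpha_{x_1}^\vee\rangle=1$, by the base case computation.
\item For each $i\ge2$, the index $x_i$ is distinct from $x_1$ and not adjacent to it. So $\alpha_{x_i}$ is orthogonal to $\alpha_{x_1}$: the Cartan matrix entry $a_{x_1x_i}$ vanishes whenever $|x_1-x_i|\ge2$.
\end{itemize}
The total pairing is therefore $1$, and $s_{x_1}$ simply subtracts one more $\alpha_{x_1}$. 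This gives the claimed formula.

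It is worth noting why the order of the factors is irrelevant. Pairwise nonconsecutive indices give commuting reflections, so the factorization $\sigma=s_{x_1}\sigma'$ can be taken in any order.

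\textbf{Main obstacle.} The only real issue is bookkeeping: the hypothesis must be used twice.
\begin{itemize}
\item $x_i\in[2,r-1]$ is what forces $\langle\hr,\alpha_{x_i}^\vee\rangle=0$. For the endpoints $1$ and $r$ the pairing would be $1$, giving $-2\alpha$ instead of $-\alpha$.
\item Nonadjacency is what kills the cross terms in the inductive step.
\end{itemize}
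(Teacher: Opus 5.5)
Your proof is correct. For $2\le k\le r-1$ you have $\langle\hr+\rho,\alpha_k^\vee\rangle=0+1=1$, and nonadjacency makes $\alpha_{x_1}$ orthogonal to every $\alpha_{x_i}$ with $i\ge 2$. So each reflection subtracts exactly one copy of its simple root, and the induction on $m$ goes through. The distinctness of the $x_i$, which you use, is implicit in ``nonconsecutive.''

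The paper gives no proof of this statement; it quotes it from Harris's thesis. There is therefore no in-paper argument to compare against, and your induction is a self-contained verification.

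If you want a non-inductive version: each such $s_k$ fixes $\hr=e_1-e_{r+1}$, and $\sigma(\rho)=\rho-\sum_{\alpha>0,\ \sigma^{-1}\alpha<0}\alpha$. Since $\sigma$ is a product of disjoint adjacent transpositions, its inversion set is exactly $\{\alpha_{x_1},\dots,\alpha_{x_m}\}$, which gives the formula at once.

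Your side remark that the pairing of $\hr$ with $\alpha_1^\vee$ or $\alpha_r^\vee$ equals $1$ requires $r\ge 2$. The proof does not use it.
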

Now we show that if $\sigma\in W$ contains any subword with indices forming a consecutive set of integers, then $\sigma\notin \mathcal{A}(\hr,\alpha_I)$ 
\begin{lemma}\label{lemma:consecutivegeneral}
    Let $\hr$ be the highest root in $\speclin$, and 
    let 
    $I\subseteq [r]$ be nonempty. 
    If $\sigma$ contains the word $s_{i+1}s_i$ or $s_is_{i+1}$ in it a reduced word expression, then $\sigma\notin \mathcal{A}(\hr,\alpha_I)$.
\end{lemma}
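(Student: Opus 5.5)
The plan is to avoid any direct computation with $\sigma(\hr+\rho)$ and instead reduce to Harris's characterization of $\mathcal{A}(\hr,0)$ in \Cref{pamela alternation result}, via a containment of alternation sets. The first step is to show $\thealtset\subseteq\mathcal{A}(\hr,0)$ for every nonempty $I\subseteq[r]$. Write $\xi=\sigma(\hr+\rho)-\rho$. If $\sigma\in\thealtset$, then $\wp(\xi-\alpha_I)>0$, so $\xi-\alpha_I$ is a nonnegative integral sum of positive roots. Adding the simple roots $\alpha_i$ for $i\in I$, which are themselves positive roots, to such an expression gives an expression of $\xi$ as a nonnegative integral sum of positive roots. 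Hence $\wp(\xi)>0$ and $\sigma\in\mathcal{A}(\hr,0)$.

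The second step uses \Cref{pamela alternation result}. Every element of $\mathcal{A}(\hr,0)$ has the form $\sigma=\prod_{k\in\mathcal{I}}s_k$, where $\mathcal{I}\subseteq[2,r-1]$ consists of pairwise nonconsecutive integers. So it suffices to show that such a $\sigma$ has no reduced word containing $s_is_{i+1}$ or $s_{i+1}s_i$.

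The generators $s_k$ with $k\in\mathcal{I}$ pairwise commute, because their indices differ by at least $2$. Hence $\ell(\sigma)=|\mathcal{I}|$; for instance, in $\mathfrak{S}_{r+1}$, $\sigma$ is a product of disjoint transpositions $(k\;k+1)$ and has exactly $|\mathcal{I}|$ inversions. Any word that is a rearrangement of these generators is reduced.

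By Matsumoto's theorem, any two reduced words for $\sigma$ are connected by braid and commutation moves. From a word whose letters are distinct and pairwise commuting, no braid move $s_ks_{k+1}s_k\leftrightarrow s_{k+1}s_ks_{k+1}$ can ever be applied. Commutation moves only permute letters, so they preserve the multiset of letters. Therefore every reduced word of $\sigma$ uses exactly the letters $\{s_k:k\in\mathcal{I}\}$, each once.

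If some reduced word contained both $s_i$ and $s_{i+1}$, then $i$ and $i+1$ would both lie in $\mathcal{I}$. This contradicts nonconsecutiveness, so $\sigma\notin\mathcal{A}(\hr,0)$, and hence $\sigma\notin\thealtset$.

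The main point requiring care is the Coxeter-theoretic step: justifying that the reduced words of a product of distinct commuting generators are exactly its rearrangements. Matsumoto's theorem handles this cleanly. Alternatively, one can argue with support: the set of generators appearing in any reduced word of $w$ is an invariant of $w$. Combined with $\ell(\sigma)=|\mathcal{I}|$, this forces every reduced word to use each $s_k$, $k\in\mathcal{I}$, exactly once and nothing else. Either route should close the argument.
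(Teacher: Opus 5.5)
Your proof is correct, and it takes a genuinely different route from the paper's.

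The paper argues locally. It disposes of indices in $I\cup\{1,r\}$ using \Cref{prop: no s1 sr} and a cited result of Harris. It then uses subword-closure of alternation sets to reduce to the two elements $s_is_{i+1}$ and $s_{i+1}s_i$ with $i,i+1\in I^c\setminus\{1,r\}$. Finally it computes $\sigma(\hr+\rho)-\rho-\alpha_I$ directly and exhibits a coefficient $-1$ on $\alpha_i$ (respectively $\alpha_{i+1}$).

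You argue globally instead. Monotonicity in $\mu$ gives $\mathcal{A}(\hr,\alpha_I)\subseteq\mathcal{A}(\hr,0)$: a partition of $\xi-\alpha_I$ extends to one of $\xi$ by appending the simple roots $\alpha_k$, $k\in I$. Harris's classification \Cref{pamela alternation result}, together with the Matsumoto (or support-invariance) fact that a product of distinct commuting generators has only its rearrangements as reduced words, then finishes the argument.

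Your route has two advantages. First, it never uses subword-closure. The paper quotes that property (from Anderson et al.) only for $\mu$ a positive root, yet applies it to $\alpha_I$, which is not a root once $n(I)\geq 2$; so your argument rests on firmer footing. Second, your containment combined with \Cref{lemma: spit out the alphas} yields the forward direction of \Cref{thm:whats in the alt set} almost immediately. Any $\sigma=\prod_{k\in\mathcal{I}}s_k$ in $\mathcal{A}(\hr,\alpha_I)$ must have $\mathcal{I}\cap I=\emptyset$, since otherwise $\hr-\alpha_I-\sum_{k\in\mathcal{I}}\alpha_k$ has a coefficient $-1$.

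The paper's computation, in exchange, is elementary and produces an explicit negative coefficient. It relies only on how simple reflections act on $\hr+\rho$, rather than on importing the full ``only if'' half of Harris's description of $\mathcal{A}(\hr,0)$.
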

\begin{proof}
    By \Cref{prop: no s1 sr}, if $i\in I\cup\{1,r\}$, then $s_i \notin\thealtset$. Moreover,
    by \cite[Proposition 3.4]{Harris2016}, any $\sigma\in W$ containing $s_i$, with $i\in I\cup\{1,r\}$ in its reduced expression, will also not be in $\thealtset$. That is to say, to complete the proof, it suffices to show only the cases of when $\sigma=s_is_{i+1}$ or $\sigma=s_{i+1}s_i$.
    
    Consider the case where $i,i+1\in I^c\setminus\{1,r\}$ and observe that
    \begin{align}
        s_is_{i+1}(\hr+\rho)-\rho-\alpha_I &=s_i(\alpha_1+\alpha_2+\ldots+\alpha_i+\alpha_{i+2}+\ldots +\alpha_r+\rho) -\rho -\alpha_I \notag\\
        &=\alpha_1+\alpha_2+\ldots+\alpha_{i-1}-\alpha_i+\alpha_{i+1}+\ldots+\alpha_r-\alpha_I.\label{eq: i i+1 case}
        \intertext{Since $i\notin I\cup\{1,r\}$, then \Cref{eq: i i+1 case} has the term $-\alpha_i$ and thus $s_is_{i+1}\notin \thealtset$. Similarly,}
        s_{i+1}s_i(\hr+\rho)-\rho-\alpha_I &= \alpha_1+\alpha_2+\ldots + \alpha_{i-1} - \alpha_{i-1} + \alpha_{i+2} +\ldots + \alpha_r - \alpha_I.\label{eq: i+1 i case}
    \end{align}
    Notice that \Cref{eq: i+1 i case} contains a term with a negative coefficient, namely $-\alpha_{i+1}$
    , and so $s_{i+1}s_i\notin \thealtset$.
    This completes the proof.
\end{proof}
Fix a nonempty indexing set $I\subseteq[r]$. 
The number of intervals in the interval partition of $I$ fully determines the number of intervals in the interval partition of $I^c$, as we show next. 
\begin{lemma}\label{def:n(I^c)}
Fix a nonempty indexing set $I\subseteq[r]$.
Then the number of intervals in the interval partition of $I^c$ is given by 
\begin{align}
    n(I^c)=\begin{cases}
        n(I)-1 & \mbox{if $1,r\in I$} \\
        n(I) & \mbox{if $1\notin I$ and $r\in I$, or if $1\notin I$ and $r\in I$}\\
        n(I)+1 & \mbox{if $1,r\notin I$}.
    \end{cases}
\end{align}
\end{lemma}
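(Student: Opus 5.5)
We prove the lemma by reading off the maximal intervals of $I$ and of $I^c$ in the order they occur along $[1,r]$. Write the interval partition of $I$ as $\bigsqcup_{x=1}^{n(I)}[i_x,j_x]$ with $i_1\le j_1<i_2\le\cdots\le j_{n(I)}$. By maximality, consecutive intervals of $I$ are separated by at least one integer, so $j_x+1<i_{x+1}$ and the gap $[j_x+1,\,i_{x+1}-1]$ is nonempty for each $x\in[n(I)-1]$. (We read the constraint in \Cref{def: interval partition} as this separation condition; the printed inequality $j_x\le i_{x+1}+2$ holds automatically.)

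The main step is to show that the interval partition of $I^c$ consists exactly of:
\begin{itemize}
\item the $n(I)-1$ internal gaps $[j_x+1,i_{x+1}-1]$;
\item the initial segment $[1,i_1-1]$, when $i_1>1$;
\item the final segment $[j_{n(I)}+1,r]$, when $j_{n(I)}<r$.
\end{itemize}
These sets are pairwise disjoint, and together with the intervals of $I$ they cover $[r]$, so their union is $I^c$.

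Each of them is a maximal run of consecutive integers in $I^c$. An internal gap is bounded on the left by $j_x\in I$ and on the right by $i_{x+1}\in I$, so it cannot be extended. The initial segment is bounded by $0$ on the left and by $i_1\in I$ on the right. The final segment is bounded by $j_{n(I)}\in I$ on the left and by $r+1$ on the right. Since the interval partition of $I^c$ is uniquely determined as its decomposition into maximal runs, this list is precisely that partition.

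It remains to count. The internal gaps always contribute $n(I)-1$. The initial segment appears if and only if $1\notin I$, because $i_1=\min I$. The final segment appears if and only if $r\notin I$, because $j_{n(I)}=\max I$. This gives the three cases:
\begin{itemize}
\item if $1,r\in I$, then $n(I^c)=n(I)-1$;
\item if exactly one of $1,r$ lies in $I$, then $n(I^c)=n(I)$;
\item if $1,r\notin I$, then $n(I^c)=n(I)+1$.
\end{itemize}
The middle case of the statement should read ``$1\notin I$ and $r\in I$, or $1\in I$ and $r\notin I$''; the duplicated clause in the printed statement is a typo.

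One degenerate situation needs a remark: $1,r\in I$ with $n(I)=1$, that is, $I=[r]$. Then $I^c=\emptyset$ and $n(I^c)=0=n(I)-1$, which agrees with the formula under the convention that the empty set has zero intervals.

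The only real obstacle is the maximality argument in the main step, which guarantees that no two of the listed pieces of $I^c$ merge into a single run. Apart from that, the proof is bookkeeping.
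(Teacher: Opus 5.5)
Your proof is correct and is essentially the paper's argument. Like you, the paper writes $I^c$ explicitly as the internal gaps $[j_x+1,i_{x+1}-1]$, together with $[1,i_1-1]$ and/or $[j_{n(I)}+1,r]$ according to whether $1$ and $r$ lie in $I$, and then counts; it calls this an induction on $n(I)$ over four cases, but no inductive step is actually used. Your uniform version also supplies the maximality and nonemptiness justification the paper leaves implicit, and you correctly read the duplicated middle clause as a typo for ``$1\in I$ and $r\notin I$''.
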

\begin{proof}
    Let $I=\bigsqcup_{x=1}^{n(I)}[i_x,j_x]$, as defined in \Cref{def: interval partition}. To prove the claim, in each case we proceed by induction on $n(I)$. \\ 
    \textbf{Case 1:} Assume both $1,r\in I$. \\
    If $n(I)=1$, then $I=[r]$ and, therefore, $n(I^c)=0$ as desired. Now suppose $I$ contains $n(I)>1$ many intervals. That is to say $I$ is of the form
    \begin{equation}
        I = [1,j_1]\sqcup [i_2,j_2] \sqcup \cdots \sqcup [i_{n(I)-1},j_{n(I)-1}]\sqcup [i_{n(I)},r].\notag
    \end{equation}
    This implies that $I^c$ is of the form 
    \begin{equation}
        I^c = [j_1+1,i_2-1] \sqcup [j_2+1] \sqcup \cdots [j_{n(I)-2}+1,i_{n(I)-1}-1] \sqcup [j_{n(I)-1}+1,i_{n(I)}-1],\notag
    \end{equation}
    and so $n(I^c)=n(I)-1$ as claimed.\\
    \textbf{Case 2:} Assume $1\in I$ and $r\notin I$. \\
    If $n(I)=1$, then $I=[2,r]$ which implies $n(I^c)=n(I)=1$ as desired. Now suppose $I$ contains $n(I)>1$ many intervals. That is to say $I$ is of the form
    \begin{equation}
        I = [1,j_1] \sqcup [i_2,j_2] \sqcup \cdots \sqcup [i_{n(I)-1},j_{n(I)-1}] \sqcup [i_{n(I)},j_{n(I)}],\notag
    \end{equation}
    with $j_{n(I)}<r$. This implies that $I^c$ is of the form
    \begin{equation}
        I^c = [j_1+1,i_2-1]\sqcup [j_2+1,i_3-1]\sqcup \cdots \sqcup [j_{n(I)-1}+1,i_{n(I)-1}-1] \sqcup [j_{n(I)}+1,r],\notag
    \end{equation}
    and so $n(I^c)=n(I)$ as claimed.\\
    \textbf{Case 3:} Assume $1\notin I$ and $r\in I$.\\
    If $n(I)=1$, then $I=[1,r-1]$ which implies $n(I^c)=n(I)=1$ as desired. Now suppose $I$ contains $n(I)>1$ many intervals. That is to say $I$ is of the form
    \begin{equation}
        I = [i_1,j_1] \sqcup [i_2,j_2] \sqcup \cdots \sqcup [i_{n(I)-1},j_{n(I)-1}]\sqcup [i_{n(I)},r],\notag
    \end{equation}
    with $i_1>1$. This implies that $I^c$ is of the form
    \begin{equation}
        I^c = [1,i_1-1] \sqcup [j_1+1,i_2-1] \sqcup \cdots \sqcup [j_{n(I)-1}+1,i_{n(I)}-1] \sqcup [j_{n(I)-1}+1,i_{n(I)}-1],\notag
    \end{equation}
    and so $n(I^c)=n(I)$ as claimed. \\
    \textbf{Case 4:} Assume both $1,r\notin I$.\\
    If $n(I)=1$, then $I=[2,r-1]$ which implies $n(I^c)=n(I)+1=2$ as desired. Now suppose $I$ contains $n(I)>1$ many intervals. That is to say $I$ is of the form
    \begin{equation}
        I = [i_1,j_1]\sqcup [i_2,j_2] \sqcup \cdots \sqcup [i_{n(I)-1},j_{n(I)-1}]\sqcup [i_{n(I)},j_{n(I)}],\notag
    \end{equation}
    with $i_1>1$ and $j_{n(I)}<r$. This implies that $I^c$ is of the form
    \begin{equation}
        I^c = [1,i_1-1] \sqcup [j_1+1,i_2-1] \sqcup \cdots \sqcup [j_{n(I)-1}+1,i_{n(I)-1}-1], \sqcup [j_{n(I)}+1,r]\notag
    \end{equation}
    and so $n(I^c)=n(I)+1$ as claimed, which completes the proof. 
\end{proof}

Now we present a result that is the first step in providing a means of converting $m_q(\hr,\alpha_I)$ into sums 
or products of weight $q$-multiplicites in lower ranks. In \Cref{section: 3 q-analogs of 101}, we utilize this result to convert the weight $q$-multiplicity into a sum of $q$-multiplicities, and in \Cref{section: 4 main results}, we utilize the following result to convert $m_q(\hr,\alpha_I)$ into a product of weight $q$-multiplicities.
\begin{lemma}\label{lemma: alphas get spit out}
    Fix $I\subseteq[r]$ and let $\mathcal{I}$ be a collection of nonconsecutive integers in $I^c\setminus\{1,r\}$.
    If $\mathcal{I}$ is nonempty and $\sigma=\prod_{i\in \mathcal{I}}s_{i}$, then 
    \begin{equation}
    \sigma(\hr+\rho)=\hr+\rho-\sum_{i\in\mathcal{I}}\alpha_{i}.\notag
    \end{equation}
    If $\mathcal{I}$ is empty, then $\sigma=1$ and we set $\sum_{i\in\mathcal{I}}\alpha_i=0$.
\end{lemma}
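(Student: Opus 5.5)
This lemma is a special case of \Cref{lemma: spit out the alphas} (Proposition 3.1.2 in \cite{HarrisThesis}), so the plan is to reduce to that result and handle the empty case separately.

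\textbf{The empty case.} If $\mathcal{I}=\emptyset$, then $\sigma$ is the empty product, that is, the identity element of $W$. Hence $\sigma(\hr+\rho)=\hr+\rho$, which agrees with the stated convention $\sum_{i\in\mathcal{I}}\alpha_i=0$.

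\textbf{The nonempty case.} Write $\mathcal{I}=\{x_1<x_2<\cdots<x_m\}$ with $m\geq 1$. Since $\mathcal{I}\subseteq I^c\setminus\{1,r\}\subseteq[2,r-1]$, the indices $x_1,\ldots,x_m$ are nonconsecutive integers in $[2,r-1]$. This is exactly the hypothesis of \Cref{lemma: spit out the alphas}, which gives
\[
\sigma(\hr+\rho)=\hr+\rho-\sum_{k=1}^m\alpha_{x_k}=\hr+\rho-\sum_{i\in\mathcal{I}}\alpha_i .
\]
The extra hypothesis that the indices avoid $I$ is not needed here; it only matters later, when $\alpha_I$ is subtracted.

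\textbf{The ordering of the product.} The one point to check is that $\prod_{i\in\mathcal{I}}s_i$ is well defined regardless of the order of its factors. Since the indices are pairwise nonconsecutive, $|x_a-x_b|\geq 2$ for $a\neq b$, so the corresponding simple reflections commute. Thus any ordering gives the same element, and the lemma applies to it.

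\textbf{A self-contained alternative.} To avoid citing \cite{HarrisThesis}, I would use two facts. First, $s_i(\rho)=\rho-\alpha_i$. Second, $\langle\hr,\alpha_i^\vee\rangle=0$ for $2\leq i\leq r-1$, because $\hr=e_1-e_{r+1}$ is orthogonal to $e_i-e_{i+1}$ for such $i$; hence $s_i$ fixes $\hr$. Applying the reflections one at a time, $s_{x_k}$ fixes $\hr$ and each previously subtracted $\alpha_{x_j}$ (since $\langle\alpha_{x_j},\alpha_{x_k}^\vee\rangle=0$ when $|x_j-x_k|\geq 2$), and it subtracts $\alpha_{x_k}$ from $\rho$. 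An induction on $m$ then yields the formula. None of these steps is a real obstacle; the only care needed is the commutation and orthogonality checks.
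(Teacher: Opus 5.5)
Your proposal is correct and takes essentially the paper's route: both proofs apply Harris's Proposition 3.1.2 (\Cref{lemma: spit out the alphas}) directly. The paper first splits $\mathcal{I}$ into pieces $\mathcal{I}_x$ along the interval partition of $I^c$ before invoking it; as you observe, this detour is unnecessary, since $\mathcal{I}\subseteq[2,r-1]$ already meets the cited hypothesis. Your self-contained induction also works and removes the dependence on the citation: it uses $s_i(\rho)=\rho-\alpha_i$, $s_i(\hr)=\hr$ for $2\leq i\leq r-1$, and $s_{x_k}(\alpha_{x_j})=\alpha_{x_j}$ when $|x_j-x_k|\geq 2$.
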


\begin{proof}
    Let $I\subseteq[r]$ be nonempty, and let $I^c$ have interval partition $I^c=I_1\sqcup I_2\sqcup \cdots \sqcup I_{n(I^c)}$, as defined in \Cref{def: interval partition}.
    For each $x\in [n(I^c)]$, select a (possibly empty) collection of nonconsecutive indices from $I_x$ and denote these by $\mathcal{I}_x=\{x_1,x_2,\ldots,x_m\}$. 
    Then, for each $\mathcal{I}_x=\{x_1,x_2,\ldots,x_m\}$, we let $\sigma_x=s_{x_1}s_{x_2}\cdots s_{x_m}$, and we set $\sigma_x=1$ if $\mathcal{I}_x$ is empty.
    By \Cref{lemma: spit out the alphas}, if $\sigma=\sigma_1\sigma_2\cdots \sigma_{n(I^c)}$, then
    \begin{equation}
    \sigma(\hr+\rho)=\sigma_1\sigma_2\cdots \sigma_{n(I^c)}(\hr+\rho)=\hr+\rho - \sum_{i\in\mathcal{I}_1}\alpha_{i}- \sum_{i\in\mathcal{I}_2}\alpha_{i}-\cdots - \sum_{i\in\mathcal{I}_{n(I^c)}}\alpha_{i}=\hr+\rho-\sum_{j\in[n(I^c)]}\left(\sum_{i\in\mathcal{I}_j}\alpha_{i}\right).\notag
    \end{equation}
    To complete the proof, it suffices to let $\mathcal{I}=\displaystyle\bigcup_{j=1}^{n(I^c)}\mathcal{I}_j$, in which case $\displaystyle\sum_{j\in[n(I^c)]}\left(\displaystyle\sum_{i\in\mathcal{I}_j}\alpha_{i}\right)=\sum_{i\in\mathcal{I}}\alpha_i$.
\end{proof}
We are now ready to give a complete characterization of the elements in the Weyl alternation sets $\thealtset$ for any nonempty $I\subseteq [r]$. 
\begin{theorem}\label{thm:whats in the alt set} 
Let $\hr$ be the highest root in $\speclin$.
    Let $I\subseteq[r]$ be nonempty, and let $\mathcal{I}$ be a (possibly empty) collection of nonconsecutive integers in $I^c\setminus\{1,r\}$.     
    Then $\sigma\in\thealtset$ if and only if 
    \[\sigma=1 \quad \text{or} \quad \sigma=\prod_{i\in \mathcal{I}}s_i.\]
\end{theorem}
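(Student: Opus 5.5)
We prove the two inclusions separately.

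\emph{Sufficiency.} Let $\mathcal{I}$ be a (possibly empty) set of nonconsecutive integers in $I^c\setminus\{1,r\}$ and $\sigma=\prod_{i\in\mathcal{I}}s_i$. By \Cref{lemma: alphas get spit out},
\[
\sigma(\hr+\rho)-\rho-\alpha_I=\hr-\sum_{i\in\mathcal{I}}\alpha_i-\sum_{i\in I}\alpha_i .
\]
Because $\mathcal{I}\subseteq I^c$, the sets $\mathcal{I}$ and $I$ are disjoint. Hence every $\alpha_k$ has coefficient $0$ or $1$ in this expression, so it is a nonnegative integral combination of simple roots. If the combination is nonzero, it can be written as a sum of simple roots, so $\wp>0$. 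If it is $0$, then $\wp(0)=1>0$. Either way $\sigma\in\thealtset$. The case $\sigma=1$ is the case $\mathcal{I}=\emptyset$.

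\emph{Necessity.} Let $\sigma\in\thealtset$ be nontrivial, with a fixed reduced word. We use the subword-closure property recalled at the start of this section: \cite[Corollary 3.2]{anderson2025supportkostantsweightmultiplicity}, or alternatively \cite[Proposition 3.4]{Harris2016}, which was already invoked in the proof of \Cref{lemma:consecutivegeneral}. This property says every reduced subword of $\sigma$ also lies in $\thealtset$; we assume it applies to $\mu=\alpha_I$ as the paper does.

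\textbf{Step 1.} Each single letter $s_i$ of the reduced word is a reduced subword. By \Cref{prop: no s1 sr}, every letter index therefore lies in $I^c\setminus\{1,r\}$.

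\textbf{Step 2.} No two letters have consecutive indices. Suppose $s_i$ and $s_{i+1}$ both occur. Then the subword consisting of those two letters, in whichever order they appear, is a reduced word $s_is_{i+1}$ or $s_{i+1}s_i$. This contradicts \Cref{lemma:consecutivegeneral}.

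\textbf{Step 3.} No index is repeated. All letters now pairwise commute, since their indices are nonconsecutive. If some $s_i$ appeared twice, we could commute the two copies together and cancel them, contradicting reducedness. So $\sigma=\prod_{i\in\mathcal{I}}s_i$, where $\mathcal{I}$ is the set of letter indices, which is a set of nonconsecutive integers in $I^c\setminus\{1,r\}$.

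The main obstacle is justifying subword closure for $\mu=\alpha_I$ rather than only for $\mu$ a positive root. If the cited results do not cover this case, we would argue directly as follows. Suppose $\sigma$ has a letter $s_j$ with $j\in I\cup\{1,r\}$, or two letters with adjacent indices. Choose a minimal such configuration. Using the formula $\sigma(\hr+\rho)=\hr+\rho-\sum_{\beta\in N(\sigma)}\beta$, where $N(\sigma)$ is the inversion set of $\sigma$, we would show that the coefficient of some $\alpha_k$ in $\sigma(\hr+\rho)-\rho-\alpha_I$ is negative, so $\wp=0$. Concretely, we would track the coefficient of $\alpha_j$, or of the leftmost offending index, using the fact that the coefficient of $\alpha_j$ in $\hr$ is $1$.
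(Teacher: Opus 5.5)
Your proof is correct and follows the paper's argument: sufficiency comes from \Cref{lemma: alphas get spit out} together with $\mathcal{I}\cap I=\emptyset$, and necessity comes from subword closure combined with \Cref{prop: no s1 sr} and \Cref{lemma:consecutivegeneral}, with your Step~3 (no repeated letters) making explicit what the paper's ``together, this implies'' leaves implicit. The obstacle you flag does not arise: for dominant $\lambda$ and $\sigma'\le\sigma$ in Bruhat order, $\sigma'(\lambda+\rho)-\sigma(\lambda+\rho)$ is a nonnegative combination of positive roots, so subword closure holds for every $\mu$ (and your fallback formula $\sigma(\hr+\rho)=\hr+\rho-\sum_{\beta\in N(\sigma)}\beta$ is false in any case, since for $\sigma=s_1$ it predicts $\hr+\rho-\alpha_1$ whereas $s_1(\hr+\rho)=\hr+\rho-2\alpha_1$).
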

\begin{proof}
    $(\impliedby)$ We begin by showing that the identity element is contained in the alternation set.
    \begin{equation}
        1(\hr+\rho)-\rho-\alpha_I = \hr + \rho -\rho - \alpha_I = \hr -\alpha_I=\alpha_{I^c}.\label{eq: identity case}
    \end{equation} 
    \Cref{eq: identity case} is a sum of positive roots, and hence the identity is contained in the Weyl alternation set $\thealtset$.
    Now consider 
$\sigma=\prod_{i\in\mathcal{I}}s_i$. 
Recall that $s_i$ and $s_j$ commute whenever $i$ and $j$ are nonconsecutive integers. 
Then, by \Cref{lemma: alphas get spit out}, observe that
    \begin{align}
        \sigma(\hr+\rho)-\rho-\alpha_I=\left(\prod_{i\in\mathcal{I}}s_i\right)(\hr+\rho)-\rho-\alpha_I&= 
        \alpha_{I^c} -\sum_{i\in\mathcal{I}}\alpha_{i}.\label{eq: double sum}
    \end{align}
    Since $\mathcal{I}\subseteq I^c\setminus\{1,r\}$, every term contained in the sum in \Cref{eq: double sum} has a distinct index contained in $I^c\setminus\{1,r\}$. Thus \Cref{eq: double sum}
    reduces to a sum of positive roots. 
    Hence, $\sigma=\prod_{i\in\mathcal{I}}s_i\in\mathcal{A}(\hr,\alpha_I)$, whenever $\mathcal{I}$ is a set of nonconsecutive integers from the set $I^c\setminus\{1,r\}$.
    
 $(\implies)$
    Suppose $\sigma\in\mathcal{A}(\hr,\alpha_I)$. \Cref{eq: identity case} shows that the identity is contained in the alternation set. 
    By \Cref{prop: no s1 sr}, if $s_x$ appears in any reduced word expression for $\sigma$, then $x\notin I\cup\{1,r\}$. 
    By \Cref{lemma:consecutivegeneral}, if 
    $s_is_j$ appears in any reduced expression for $\sigma$, then $i$ and $j$ are nonconsecutive indices in the set $I^c\setminus\{1,r\}$. 
    Together, this implies that if $\sigma\in\thealtset$, then $\sigma=1$ or $\sigma=s_{\ell_1}s_{\ell_2}\cdots s_{\ell_m}$  where $\{\ell_1,\ell_2,\ldots,\ell_m\}$ is a set of nonconsecutive integers in $I^c\setminus\{1,r\}$, which completes the proof.   
\end{proof}
We now recall the following enumerative result, and refer the reader to \cite[Proposition 2.3]{harry2024computingqmultiplicitypositiveroots} for a proof. 
\begin{proposition}\label{prop: count is Fibonacci}
    The number of ways to select nonconsecutive numbers from $[n]=\{1,2,\ldots,n\}$ is given by $F_{n+2}$, where $F_i$ denotes the $i$-th Fibonacci number.
\end{proposition}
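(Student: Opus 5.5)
We prove the statement by strong induction on $n$, splitting the admissible subsets according to whether they contain $n$. Call a subset $S\subseteq[n]$ admissible if it contains no two consecutive integers; the empty set counts as admissible. Let $a_n$ be the number of admissible subsets of $[n]$, and adopt the convention $a_0=1$, counting only the empty subset of $[0]=\emptyset$.

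\textbf{Base cases.} For $n=0$ we have $a_0=1=F_2$. For $n=1$ the admissible subsets are $\emptyset$ and $\{1\}$, so $a_1=2=F_3$. If one prefers to start at $n=1$, the case $n=2$ also works: the admissible subsets are $\emptyset$, $\{1\}$ and $\{2\}$, giving $a_2=3=F_4$.

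\textbf{Inductive step.} Let $n\geq 2$ and let $S$ be an admissible subset of $[n]$.
\begin{itemize}
\item If $n\notin S$, then $S$ is an arbitrary admissible subset of $[n-1]$. There are $a_{n-1}$ of these.
\item If $n\in S$, then $n-1\notin S$. Hence $S\setminus\{n\}$ is an arbitrary admissible subset of $[n-2]$; when $n=2$ this is the empty subset of $[0]$. Conversely, adding $n$ to any admissible subset of $[n-2]$ gives an admissible subset of $[n]$. So there are $a_{n-2}$ subsets of this kind.
\end{itemize}
These two cases are disjoint and together cover every admissible $S$. Therefore
\[a_n=a_{n-1}+a_{n-2}.\]
By the inductive hypothesis, $a_{n-1}=F_{n+1}$ and $a_{n-2}=F_n$. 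The Fibonacci recursion then gives $a_n=F_{n+1}+F_n=F_{n+2}$, as claimed.

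\textbf{Points to check.} The only delicate part is bookkeeping. First, the count includes the empty selection; this is the convention the paper needs, since $\sigma=1$ is counted in $\mathcal{A}(\hr,\alpha_I)$. Second, the base case $a_0=1$ must match the indexing $F_2=1$, so that the recursion runs from $n=2$.

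An alternative route is a bijection with compositions of $n+1$ into parts $1$ and $2$. That approach works too, but the direct recursion above is shorter.
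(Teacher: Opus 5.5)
Your proof is correct and complete. Splitting on whether $n\in S$ gives $a_n=a_{n-1}+a_{n-2}$, and your base cases $a_0=1=F_2$ and $a_1=2=F_3$ match the paper's indexing $F_1=F_2=1$.

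The paper does not prove this proposition itself; it only cites Proposition 2.3 of Harry's paper, so your recursion supplies the standard self-contained argument. Your convention $a_0=1$, counting the empty selection from $[0]$, is exactly what the paper needs. It applies the proposition to empty ranges, for instance choosing from $[i_1-2]$ when $i_1=2$, and counts that as $F_2=1$.
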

Now we prove that the Weyl alternation sets $\thealtset$ for any nonempty $I\subseteq[r]$ are enumerated by a product of Fibonacci numbers dependent on the indexing set $I$.
\corr
\begin{proof}
    By \Cref{thm:whats in the alt set}, $\sigma\in\thealtset$ if and only if 
    \[\sigma=1 \quad \text{or}\quad \sigma=\prod_{i\in\mathcal{I}}s_i,\]
    where $\mathcal{I}$ is a set of nonconsecutive integers in $I^c\setminus\{1,r\}$. Recall that $I^c$ has interval partition 
    \[I^c = I_1 \sqcup I_2 \sqcup \cdots \sqcup I_{n(I^c)},\]
    where $n(I^c)$ is as defined in \Cref{def: interval partition}.  
    For each $x\in [n(I^c)]$, select $\mathcal{I}_x=\{x_1,x_2,\ldots,x_m\}$, which is 
    a (possibly empty) collection of nonconsecutive integers from $I_x$.
    Then, for each $\mathcal{I}_x=\{x_1,x_2,\ldots,x_m\}$, we let $\sigma_x=s_{x_1}s_{x_2}\cdots s_{x_m}$, and we set $\sigma_x=1$ if $\mathcal{I}_x$ is empty. That is to say, if $\sigma\in\thealtset$, then either
    \[\sigma=1 \quad \text{or}\quad \sigma=\sigma_1\sigma_2\cdots\sigma_{n(I^c)}.\]
    Moreover, $\mathcal{I}_x\cap \mathcal{I}_y=\emptyset$ for all distinct $x,y\in[n(I^c)]$.
    Thus, we count the number of elements in the Weyl alternation set $\thealtset$ by counting all of the possible $\sigma$ constructed as products of simple reflections whose indices come from the sets $\mathcal{I}_x$ of nonconsecutive integers. 
    That is, we can take the product of the counts by building each $\sigma_x$ individually. 
    
    Observe that if $I=\bigsqcup_{x=1}^{n(I)}[i_x,j_x]$ and $1\notin I$, then by \Cref{thm:whats in the alt set} and \Cref{prop: no s1 sr}, 
    \begin{equation}
        \sigma_1=\prod_{i\in \mathcal{I}_1}s_i,\notag
    \end{equation} 
    where
    $\mathcal{I}_1$ is a (possibly empty) set of nonconsecutive integers in the interval $[2,i_1-1]$. By shifting the indices by $-1$, this is exactly equivalent to choosing nonconsecutive integers in the interval $[i_1-2]$ and so by \Cref{prop: count is Fibonacci}, there are $F_{i_1}$ many such subsets.  
    Notice that if $1\in I$, then $i_1=1$ and $F_{i_1}=F_1=1$, and so in both cases the count is determined by $F_{i_1}$. 
    If $r\notin I$, then by \Cref{thm:whats in the alt set} and \Cref{prop: no s1 sr},
    \[\sigma_{n(I)}=\prod_{i\in\mathcal{I}_{n(I)}}s_i,\]
    such that $\mathcal{I}_{n(I)}$ is a set of nonconsecutive integers in the interval $[j_{n(I)}+1,r-1]$. By shifting the indices by $-j_{n(I)}$, this is exactly equivalent to choosing nonconsecutive integers in the interval $[r-j_{n(I)}-1]$ and so by \Cref{prop: count is Fibonacci}, there are $F_{r-j_{n(I)}+1}$ many such subsets. 
    Notice if $r\in I$, then $j_{n(I)}=r$ and $F_{r-j_{n(I)}+1}=F_{1}=1$, and so in both cases the count is determined by $F_{r-j_{n(I)}+1}$.

    Now consider $\sigma_x$ for each $x\in[n(I)]$. Either $\sigma_x=1$, or $\sigma_x=\prod_{i\in\mathcal{I}_x}s_i$ where 
    $\mathcal{I}_x$ is a set of nonconsecutive integers in the interval $I_x = [j_x+1,i_{x+1}-1]$. Notice that by shifting the indices by $-j_x$, this is exactly equivalent to choosing nonconsecutive integers in the interval $[i_{x+1}-j_x-1]$. So, by \Cref{prop: count is Fibonacci}, there are $F_{i_{x+1}-j_x+1}$ many such subsets. Therefore, the cardinality of $\thealtset$ is enumerated by the product  
    \begin{equation}
        \left|\mathcal{A}(\hr,\alpha_I)\right|=\prod_{x=0}^{n(I)+1}F_{\ell_x},\notag
    \end{equation}
    where $F_n$ denotes the $n$-th Fibonacci number, $\ell_0=i_1$, $\ell_{n(I)+1}=r-j_{n(I)}+1$, and for each $x\in[n(I)]$, $\ell_x=i_{x+1}-j_x+1$, which completes the proof.
\end{proof}

\section{A $q$-analog for a sum of two positive roots }\label{section: 3 q-analogs of 101}
In this section, we let $I\subseteq[r]$ have interval partition $I=[1,i]\sqcup[i+j+1,r]$, with $i\in[r-2]$ and $j\in [r-i-1]$.   
We show that in this case, the weight $q$-multiplicity $m_q(\hr,\alpha_I)$ 
is a difference of powers of $q$. 
This result, along with \Cref{kim result}, are utilized in \Cref{section: 4 main results} to prove \Cref{thm: the big one}, where we consider any  nonempty subset $I\subseteq[r]$. 
We begin by recalling a property of Kostant's partition function that greatly reduces the computational complexity of determining $m_q(\hr,\alpha_I)$. 
\begin{lemma}[Lemma 3.4 in \cite{harry2024computingqmultiplicitypositiveroots}]
    \label{lemma: rescaling q partition is cool}
    Let $i,j$ be integers such that $1\leq i \leq j\leq r$. Then,
    \[\wp_q(\alpha_{i,j})=\qpartition{\alpha_{1,j-i+1}}=q(q+1)^{j-i}.\]
\end{lemma}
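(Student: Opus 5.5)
Since every positive root $\alpha_{k,\ell}$ of $\speclin$ is a consecutive string of simple roots, I will prove the statement by a bijection on partitions combined with a direct count. We need $\wp_q(\alpha_{i,j})=q(q+1)^{j-i}$. The first step is to observe that the map $\alpha_{k,\ell}\mapsto\alpha_{k-i+1,\ell-i+1}$ is a bijection between the positive roots supported in $[i,j]$ and the positive roots supported in $[1,j-i+1]$. Any way of writing $\alpha_{i,j}$ as a sum of positive roots uses only roots supported in $[i,j]$, because all coefficients outside $[i,j]$ vanish and positive roots have nonnegative coefficients. Hence this shift is a bijection of partitions that preserves the number of parts. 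This gives $\wp_q(\alpha_{i,j})=\wp_q(\alpha_{1,j-i+1})$ coefficientwise.

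It remains to compute $\wp_q(\alpha_{1,n})$ with $n=j-i+1$. Each simple root $\alpha_k$, $k\in[n]$, appears with coefficient exactly $1$. So a partition of $\alpha_{1,n}$ into positive roots is a set of roots whose supports partition $[1,n]$ into intervals of consecutive integers. Such a decomposition is determined by choosing which of the $n-1$ \emph{gaps} between $k$ and $k+1$ are cut points. Thus partitions of $\alpha_{1,n}$ into exactly $m$ positive roots correspond to subsets of the $n-1$ gaps of size $m-1$, and there are $\binom{n-1}{m-1}$ of them.

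Summing over $m$ gives
\[\wp_q(\alpha_{1,n})=\sum_{m=1}^{n}\binom{n-1}{m-1}q^m=q(q+1)^{n-1}\]
by the binomial theorem. Substituting $n-1=j-i$ yields the claim.

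The only point needing care is the claim that no root supported outside $[i,j]$, and no repeated simple root, can occur in a partition. This is immediate from coefficient nonnegativity and the multiplicity-one coefficients, so I expect no real obstacle. The main step is the gap-cut bijection.
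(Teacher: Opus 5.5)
Your argument is correct and complete. The paper does not prove this statement itself: it is quoted from Harry's work and used as a black box, so there is no in-paper proof to compare yours against.

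What you give is a self-contained justification along the natural lines:
\begin{itemize}
\item Nonnegativity of coefficients confines every summand to roots $\alpha_{k,\ell}$ with $i\le k\le\ell\le j$, so the index shift is a bijection on partitions that preserves the number of parts.
\item Each simple root of $\alpha_{1,n}$ has coefficient $1$, so the summands have pairwise disjoint interval supports tiling $[1,n]$.
\item Partitions into $m$ roots are therefore exactly the $\binom{n-1}{m-1}$ choices of $m-1$ cut points among the $n-1$ gaps, and the binomial theorem gives $q(q+1)^{n-1}$.
\end{itemize}

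Two small remarks. First, the shift step is optional. The gap-cut bijection applies verbatim to $[i,j]$ with its $j-i$ gaps, and it yields both equalities at once. Second, the same support observation is what makes the paper's later factorization $\wp_q(\alpha_I)=\prod_x \wp_q(\alpha_{i_x,j_x})$ rigorous. There, any summand $\alpha_{k,\ell}$ of $\alpha_I$ must avoid indices outside $I$, so it lies inside a single block $[i_x,j_x]$. Hence partitions split blockwise and part counts add.
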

Consider evaluating the partition function on two positive roots whose sets of indices are disjoint. 
For example, let $r=7$ and consider 
\begin{equation}
    \wp_q\Big((\alpha_2+\alpha_3)+(\alpha_5+\alpha_6)\Big).\notag
\end{equation}
Computing the number of ways to write  $(\alpha_2+\alpha_3)+(\alpha_5+\alpha_6)$ as a sum of positive roots is identical to computing the number of ways to write $\alpha_2+\alpha_3$ and $\alpha_5+\alpha_6$ individually. Since any choice of positive root containing terms with indices $2$ and $6$ must also include every integer betwen $2$ and $6$, it is equivalent to taking the product of the partition function on the positive roots independently. More precisely,
\begin{equation}
    \wp_q\Big((\alpha_2+\alpha_3)+(\alpha_5+\alpha_6)\Big)=\wp_q(\alpha_2+\alpha_3)\cdot \wp_q(\alpha_5+\alpha_6).\notag
\end{equation}
This observation works whenever we want to compute the value of the Kostant's partition function on a sum of positive roots whose indices on the simple roots of each positive root are disjoint. We provide a precise result for this observation now.
\begin{lemma}\label{lemma: split up partition function}
    Let $I\subseteq [r]$ be any nonempty indexing set with interval partition $I=\bigsqcup_{x=1}^{n(I)}[i_x,j_x]$. Then,
    \begin{equation}
        \wp_q(\alpha_I) = \prod_{x=1}^{n(I)}\wp_q(\alpha_{i_x,j_x}).\notag
    \end{equation}
\end{lemma}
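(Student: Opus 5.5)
The plan is to set up a weight-preserving bijection between partitions of $\alpha_I$ and tuples of partitions of the blocks $\alpha_{i_x,j_x}$; the lemma then follows from the multiplicativity of generating functions. Write $\alpha_I=\sum_{x=1}^{n(I)}\alpha_{i_x,j_x}$. Recall that $\wp_q(\xi)=\sum_{P}q^{|P|}$, where $P$ ranges over multisets of positive roots summing to $\xi$ and $|P|$ is the number of roots in $P$. It therefore suffices to exhibit a bijection
\[
\Big\{P : \textstyle\sum_{\beta\in P}\beta=\alpha_I\Big\}\longleftrightarrow \prod_{x=1}^{n(I)}\Big\{P_x : \textstyle\sum_{\beta\in P_x}\beta=\alpha_{i_x,j_x}\Big\}
\]
under which $|P|=\sum_x|P_x|$. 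Once this is in place, summing $q^{|P|}=\prod_x q^{|P_x|}$ over both sides gives the product formula.

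The key step is a support argument. Let $P$ be any partition of $\alpha_I$, and let $\alpha_{a,b}\in P$. Every root in $P$ has nonnegative coefficients, and the coefficient of $\alpha_k$ in $\alpha_I$ is $0$ whenever $k\notin I$. Hence no root of $P$ can involve $\alpha_k$ for $k\in I^c$, and so $[a,b]\subseteq I$. Since $[a,b]$ is an interval of consecutive integers contained in $I$, and the intervals $[i_x,j_x]$ are the maximal runs of consecutive elements of $I$, the interval $[a,b]$ lies in exactly one block $[i_x,j_x]$. Here we use that consecutive blocks are separated by at least one element of $I^c$, which is what maximality guarantees.

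With this in hand, define $P_x=\{\beta\in P:\operatorname{supp}\beta\subseteq[i_x,j_x]\}$, counted with multiplicity. These multisets partition $P$. Comparing the coefficients of $\alpha_k$ for $k\in[i_x,j_x]$ shows that $P_x$ sums to $\alpha_{i_x,j_x}$, because only roots in $P_x$ contribute to those coefficients. Conversely, any tuple $(P_1,\dots,P_{n(I)})$ of partitions of the blocks has multiset union summing to $\alpha_I$. The two maps are mutually inverse, and they preserve total size.

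The main subtlety is the maximality or separation of the blocks. The definition as written includes the odd condition $j_x\leq i_{x+1}+2$, so I would instead invoke directly that the blocks are maximal runs, which forces $i_{x+1}\geq j_x+2$. Alternatively, one can argue directly: if a root spanned two blocks, it would contain some $\alpha_k$ with $k\notin I$ in between them, which is impossible by the support argument. Everything else is routine bookkeeping.
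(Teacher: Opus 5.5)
Your proposal is correct and follows the same idea as the paper: positive roots are intervals $\alpha_{a,b}$ and the coefficients outside $I$ vanish, so no root can straddle a gap between blocks, and partitions of $\alpha_I$ therefore split independently across the blocks. Your size-preserving bijection makes rigorous what the paper's proof only asserts informally. Your remark that maximality forces $i_{x+1}\geq j_x+2$, in place of the misprinted condition $j_x\leq i_{x+1}+2$, is also apt.
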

\begin{proof}
    
    First, recall that $\alpha_I$ is a sum of $\alpha_{i,j}$, where each $\alpha_{i,j}$ is contained in a distinct and disjoint interval from every other $\alpha_{i',j'}$ in the sum. This means that, when computing the number of ways one can express $\alpha_I$ as a sum of positive roots, the number of components for $\alpha_{i,j}$ is independent of every other positive root. Simply put,
    \[\wp_q(\alpha_I)=\prod_{k=1}^n\wp_q(\alpha_{i_k,j_k})\]
    which completes the proof.
\end{proof}
We are now ready to prove that the weight $q$-multiplicity $m_q(\hr,\alpha_I)$ for $\alpha_I=\alpha_{1,i}+\alpha_{i+j+1}$ is a difference of powers of $q$. 

\difference
\begin{proof}
    Throughout the proof, we often consider ranks less than rank $r$. To avoid confusion, we decorate
    $\hr$, $\rho$, and $\mathcal{A}$ with a subscript 
    to specify rank. 
    By \Cref{thm:whats in the alt set}, $\sigma\in\mathcal{A}_r(\hr_r,\alpha_{1,i}+\alpha_{i+j+1,r})$ if and only if 
    \begin{equation}
        \sigma = 1 \quad \text{ or } \quad \sigma=\sigma_1,\notag
    \end{equation}
    with  
    $\sigma_1=\prod_{x\in\mathcal{I}_1}s_x$, 
    where $\mathcal{I}_1$  
    is a set of nonconsecutive integers in the interval $[i+1,i+j]$. Observe that when $\mathcal{I}_1$ is empty, then $\sigma_1=1$.

    We now proceed by considering what elements are in the Weyl alternation set for a fixed $j$ and use this to compute the $q$-multiplicity.
    If $j=1$, then the Weyl alternation set is precisely
    \begin{equation}
        \mathcal{A}_r(\hr_r,\alpha_{1,i}+\alpha_{i+2,r}) = \{1,~s_{i+1}\}.\notag
    \end{equation}
    Since $(-1)^{\ell(s_{i+1})}=-1$, the weight $q$-multiplicity reduces to
    \begin{align*}
        m_q(\hr_r,\alpha_{1,i}+\alpha_{i+2,r}) &= \wp_q(1(\hr_r+\rho_r)-\rho_r-\alpha_{1,i}-\alpha_{i+2,r})-\wp_q(s_{i+1}(\hr_r+\rho_r)-\rho_r-\alpha_{1,i}-\alpha_{i+2,r}) \\
        &= \wp_q(\alpha_{i+1}) - \wp_q(0) \\ 
        &= q-1\\
        &=q^1-q^{1-1},
    \end{align*}
    which is in the desired form.
    If $j=2$, then the Weyl alternation set is precisely
    \begin{equation}
        \mathcal{A}_r(\hr_r,\alpha_{1,i}+\alpha_{i+3,r}) = \{1,s_{i+1},s_{i+2}\}.\notag
    \end{equation}
    Then the weight $q$-multiplicity is
    \begin{align*}
        m_q(\hr_r,\alpha_{1,i}+\alpha_{i+3}) &= \wp_q(1(\hr_r+\rho_r)-\rho_r-\alpha_{1,i}-\alpha_{i+3,r})-\wp_q(s_{i+1}(\hr_r+\rho_r)-\rho_r-\alpha_{1,i}-\alpha_{i+3,r}) \\
        &\quad\quad -\wp_q(s_{i+2}(\hr_r+\rho_r)-\rho_r-\alpha_{1,i}-\alpha_{i+3,r}) \\
        &= \wp_q(\alpha_{i+1,i+2}) - \wp_q(\alpha_{i+2}) - \wp_q(\alpha_{i+1}) \\
        &= q(q+1)^{1}-q-q \\
        &= q^2-q,
    \end{align*}
    which is in the desired form. 
    
    Now consider $j\geq3$.
    In this case, there exist elements in the Weyl alternation set that have lengths greater than one. 
    To address this, we partition the Weyl alternation set into four disjoint subsets depending on whether or not the Weyl group elements contain $s_{i+1}$, or $s_{i+j}$ in a reduced word expression.
    To this end, let $\sigma_1'$ denote an element of the Weyl alternation set  $\mathcal{A}_r(\hr_r,\alpha_{1,i}+\alpha_{i+j+1,r})$ that does not contain either $s_{i+1}$ or $s_{i+j}$ in a reduced word expression.
    Thus, every $\sigma\in \mathcal{A}_r(\hr_r,\alpha_{1,i}+\alpha_{i+j+1,r})$ can be expressed as
    \begin{equation}
        \sigma = \begin{cases}
        \sigma_1' & \text{$\sigma$ does not contain either $s_{i+1}$ or $s_{i+j}$ in a reduced word expression} \\
        s_{i+1}\sigma_1' & \text{$\sigma$ contains $s_{i+1}$ and does not contain $s_{i+j}$} \\
        \sigma_1's_{i+j} & \text{$\sigma$ contains $s_{i+j}$ and does not contain $s_{i+1}$} \\
        s_{i+1}\sigma_1's_{i+j} & \text{$\sigma$ contains both $s_{i+1}$ and $s_{i+j}$ in a reduced word expression}.\notag
    \end{cases}
    \end{equation}
    We define the sets $A,B,C,D$ as follows:
    \begin{itemize}
        \item $A=\{\sigma\in\mathcal{A}_r(\hr_r,\alpha_{1,i}+\alpha_{i+j+1,r}): \sigma=\sigma_1'\}$, 
        \item $B=\{\sigma\in\mathcal{A}_r(\hr_r,\alpha_{1,i}+\alpha_{i+j+1,r}): \sigma=s_{i+1}\sigma_1'\}$,
        \item $C=\{\sigma\in\mathcal{A}_r(\hr_r,\alpha_{1,i}+\alpha_{i+j+1,r}): \sigma=\sigma_1's_{i+j}\}$, and 
        \item $D=\{\sigma\in\mathcal{A}_r(\hr_r,\alpha_{1,i}+\alpha_{i+j+1,r}): \sigma=s_{i+1}\sigma_1's_{i+j}\}$. 
    \end{itemize}
    This partitions the Weyl alternation set into the disjoint subsets $A$, $B$, $C$, and $D$. Thus, the weight $q$-multiplicity satisfies
    \begin{align}
    m_q(\hr_r,\alpha_{1,i}+\alpha_{i+j+1,r}) 
    &= \sum_{\sigma\in A} (-1)^{\ell(\sigma)}\qpartition{\sigma(\hr_r+\rho_r)-\rho_r-\alpha_{1,i}-\alpha_{i+j+1,r}} \notag \\
    &\qquad + \sum_{\sigma\in B} (-1)^{\ell(\sigma)}\qpartition{\sigma(\hr_r+\rho_r)-\rho_r-\alpha_{1,i}-\alpha_{i+j+1,r}} \notag \\
    &\qquad +\sum_{\sigma\in C}  (-1)^{\ell(\sigma)}\qpartition{\sigma(\hr_r+\rho_r)-\rho_r-\alpha_{1,i}-\alpha_{i+j+1,r}} \notag \\
    &\qquad + \sum_{\sigma\in D} (-1)^{\ell(\sigma)}\qpartition{\sigma(\hr_r+\rho_r)-\rho_r-\alpha_{1,i}-\alpha_{i+j+1,r}}.
        \label{eq: difference of q broken down}
    \end{align}
    Note that the identity element in the Weyl alternation set $\mathcal{A}(\hr_r,\alpha_{1,i}+\alpha_{i+j+1,r})$ appears as a term in the sum corresponding to the set $A$
    since, by definition, $\sigma_1'$ can be the identity element, as the only requirement is that it does not contain $s_{i+1}$ nor $s_{i+j}$ in any reduced word expression for $\sigma_1'$. 

    To compute the weight $q$-multiplicity $m_q(\hr_r,\alpha_{1,i}+\alpha_{i+j+1,r})$ for $j\geq 3$, we prove that each sum in \Cref{eq: difference of q broken down}, through some manipulation and reindexing, is equivalent to computing the weight $q$-multiplicity $m_q(\hr_t,0)$, with $t\in\{j,j-1,j-2\}$ as described in \Cref{eq: pamela result}.  We proceed by considering each sum individually. \\
    Consider the sum $\sum_{\sigma\in A} (-1)^{\ell(\sigma)}\qpartition{\sigma(\hr_r+\rho_r)-\rho_r-\alpha_{1,i}-\alpha_{i+j+1,r}}$ in \Cref{eq: difference of q broken down}. For any arbitrary $\sigma=\sigma_1'$, the input of Kostant's partition function can be reduced to a partition function on a positive root minus a sum of simple roots. By \Cref{lemma: alphas get spit out}, for any $\sigma_1'\in A$ with $ \sigma_1'=\prod_{x\in\mathcal{I}_1}s_x$ where $\mathcal{I}_1$ is a set of nonconsecutive integers in the interval $[i+2,i+j-1]$, we have that
    \begin{align}
        \wp_q\left(\sigma_1'(\hr_r+\rho_r)-\rho_r-\alpha_{1,i}-\alpha_{i+j+1,r}\right) &=
        \wp_q\left(\hr_r - \alpha_{1,i} -\alpha_{i+j+1,r} - \sum_{x\in \mathcal{I}_1} \alpha_{x}\right)\notag \\
        &= \wp_q\left(\alpha_{i+1,i+j} - \sum_{x\in \mathcal{I}_1} \alpha_{x}\right).\notag
    \end{align}
    Note that $\sum_{x\in\mathcal{I}_1}\alpha_x=0$ if  $\sigma_1'=1$.  
    Therefore, the sum corresponding to the set $A$ can be expressed as
    \begin{align}
        \sum_{\sigma\in A} (-1)^{\ell(\sigma)}\qpartition{\sigma(\hr_r+\rho_r)-\rho_r-\alpha_{1,i}-\alpha_{i+j+1,r}} = \sum_{\sigma\in A}(-1)^{\ell(\sigma)}\qpartition{\alpha_{i+1,i+j}-\sum_{x\in\mathcal{I}_1}\alpha_x}\label{turn this into pamela result}.
    \end{align}
    Observe that every index on the simple roots appearing in the input of Kostant's partition function in \Cref{turn this into pamela result} have the form $i+c$ for some $c\in [j]$. 
    We shift the indices by $-i$ to set the smallest index to~$1$. Therefore, by \Cref{lemma: rescaling q partition is cool}, \Cref{turn this into pamela result} can be expressed as
    \begin{equation}
        \sum_{\sigma\in A} (-1)^{\ell(\sigma)}\qpartition{\sigma(\hr_r+\rho_r)-\rho_r-\alpha_{1,i}-\alpha_{i+j+1,r}} = \sum_{\sigma\in A'} (-1)^{\ell(\sigma)}\qpartition{\alpha_{1,j}-\sum_{x\in\mathcal{I}'_1}\alpha_x},\label{match this with EQ1}
    \end{equation}
    where $A'$ and $\mathcal{I}_1'$ denote the sets $A$ and $\mathcal{I}_1$ respectfully after shifting the indices. Moreover, notice that after the shift, $\sigma\in A'$ implies $\sigma=\prod_{x\in\mathcal{I}_1'}s_x$, where $\mathcal{I}_1'$ is a set of nonconsecutive integers in the interval $[2,j-1]$.
    
    Now consider $m_q(\hr_j,0)$ in rank $j$. By \Cref{pamela alternation result}, $\sigma\in \mathcal{A}_j(\hr_j,0)$ if and only if $\sigma=\prod_{x\in\mathcal{I}}s_x$, where $\mathcal{I}$ is a set of nonconsecutive integers in the interval $[2,j-1]$. Therefore, by \Cref{lemma: alphas get spit out}, we can express $m_q(\hr_j,0)$~as 
    \begin{align}
        m_q(\hr_j,0) &= \sum_{\sigma\in W} (-1)^{\ell(\sigma)}\qpartition{\sigma(\hr_j+\rho_j)-\rho_j} \notag\\
        &= \sum_{\sigma\in\mathcal{A}_j(\hr_j,0)} (-1)^{\ell(\sigma)}\qpartition{\hr_{j}-\sum_{x\in\mathcal{I}}\alpha_x}. \label{thing to match to}
    \end{align}
    Now we show that \Cref{match this with EQ1} is equivalent to \Cref{thing to match to}, despite these expressions being in different ranks. 
    To prove this, we first show that the sum contains an equivalent number of terms to the size of the alternation set $\mathcal{A}_j(\hr_j,0)$,
    and that every term in \Cref{match this with EQ1} has a unique corresponding term in \Cref{thing to match to}, which proves the claim.  
    First, recall that by \Cref{pamela alternation result} we know 
    \[|\mathcal{A}_j(\hr_j,0)|=F_j,\]
    and by \Cref{thm:whats in the alt set}, if $\sigma\in A'$, then $\sigma=1$ or $\sigma=\prod_{x\in\mathcal{I}'_1}s_x$, where $\mathcal{I}'_1$ is a set of nonconsecutive integers in the interval $[2,j-1]$. Notice that this is exactly equivalent to choosing nonconsecutive integers in the interval $[j-2]$, which is counted by $F_j$. This implies $|A'|=F_j$, and so indeed both sums have an equivalent number of terms. 
    Now we show that every term in \Cref{match this with EQ1} corresponds uniquely to a term in \Cref{thing to match to}. 
    First, consider the term indexed by the identity element in \Cref{match this with EQ1}; the corresponding term is of the form $(-1)^{\ell(1)}\qpartition{\alpha_{i+1,i+j}-0}$.  
    By \Cref{lemma: rescaling q partition is cool}, 
    \begin{align}
        (-1)^{\ell(1)}\qpartition{\alpha_{i+1,i+j}-0} &=
        \qpartition{\alpha_{i+1,i+j}} = q(q+1)^{j-1}. \notag
    \end{align}
    Likewise, the term indexed by the identity element in $m_q(\hr_j,0)$  is
    \begin{equation}
        (-1)^{\ell(1)}\qpartition{1(\hr_j+\rho_j)-\rho_j} = \qpartition{\alpha_{1,j}}= q(q+1)^{j-1}.\notag
    \end{equation}
    Therefore, the terms indexed by the identity elements are equal.
    Now consider any arbitrary non-identity element $\sigma\in A'$ 
    with corresponding term in \Cref{match this with EQ1} given by
    \begin{align}
        (-1)^{\ell(\sigma)}\qpartition{\alpha_{1,j}-\sum_{x\in\mathcal{I}'_1}\alpha_x},\label{phase 1 turn into rank j result}
    \end{align}
    with $\mathcal{I}'_1$ a set of nonconsecutive integers in the interval $[2,j-1]$.  
    By \Cref{pamela alternation result}, this is equivalent to a unique  $\sigma^*\in\mathcal{A}_j(\hr_j,0)$ using the same indices in $\mathcal{I}_1'$ for the simple reflections as in $\sigma$, but simple reflections in the Weyl group in rank $j$. This implies
    \begin{align}
        (-1)^{\ell(\sigma^*)}\qpartition{\hr_j-\sum_{x\in\mathcal{I}}\alpha_x} &= (-1)^{\ell(\sigma)}\qpartition{\alpha_{1,j}-\sum_{x\in\mathcal{I}'_1}\alpha_x}.\notag
    \end{align}
    Since our choice of non-identity element $\sigma\in A$ was arbitrary, every term in \Cref{turn this into pamela result} 
    corresponds to a unique term in $m_q(\hr_j,0)$, and the two equations are equivalent. 
    Therefore, by \Cref{eq: pamela result},
    \begin{equation}
        \sum_{\sigma\in A}(-1)^{\ell(\sigma)}\qpartition{\sigma(\hr_r+\rho_r)-\rho_r-\alpha_{1,i}-\alpha_{i+j+1,r}} = m_q(\hr_j,0) = \sum_{t=1}^j q^t. \label{DIFF OF EQS SUM 1}
    \end{equation} 
    Consider the sum $\sum_{\sigma\in B}(-1)^{\ell(\sigma)}\qpartition{\sigma(\hr_r+\rho_r)-\rho_r-\alpha_{i,1}-\alpha_{i+j+1,r}}$ in \Cref{turn this into pamela result}.
    For any arbitrary $\sigma=s_{i+1}\sigma_1'$, the input of Kostant's partition function can be reduced to a partition function on a positive root minus a sum of simple roots. By \Cref{lemma: alphas get spit out}, for any $s_{i+1}\sigma_1'\in B$ where $s_{i+1}\sigma_1'=s_{i+1}\cdot \prod_{x\in\mathcal{I}_1}s_x$ with $\mathcal{I}_1$ a set of nonconsecutive integers in the interval $[i+3,i+j-1]$, we have that
    \begin{align}
        &(-1)^{\ell(\sigma)}\qpartition{\sigma(\hr_r+\rho_r)-\rho_r-\alpha_{1,i}-\alpha_{i+j+1,r}} \notag\\
        &\hspace{5cm}= (-1)^{\ell(s_{i+1}\sigma_1')}\qpartition{\hr_r-\alpha_{1,i}-\alpha_{i+1}-\alpha_{i+j+1,r}-\sum_{x\in\mathcal{I}_1}\alpha_x}.\notag \\
        &\hspace{5cm}= (-1)^{\ell(s_{i+1}\sigma_1')}\qpartition{\alpha_{i+2,i+j}-\sum_{x\in\mathcal{I}_1}\alpha_x}.\notag
    \end{align}
    Note that $\sum_{x\in\mathcal{I}_1}\alpha_x=0$ if $\sigma'_1=1$. Since $\ell(s_{i+1}\sigma_1')=\ell(s_{i+1})+\ell(\sigma_1')$, the sum corresponding to the set $B$ can be expressed as
    \begin{align}
        \sum_{\sigma\in B}(-1)^{\ell(\sigma)}\qpartition{\sigma(\hr_r+\rho_r)-\rho_r-\alpha_{1,i}-\alpha_{i+j+1,r}} &= -\sum_{\substack{\sigma\in B \\ \sigma=s_{i+1}\sigma_1'}}(-1)^{\ell(\sigma_1')}\qpartition{\alpha_{i+2,i+j}-\sum_{x\in\mathcal{I}_1}\alpha_x}.\label{eq: case 2 in diff of q's}
    \end{align}
    Observe that every index on the simple roots appearing in the input of Kostant's partition function in \Cref{eq: case 2 in diff of q's} have the form $i+c$ for some $c\in[2,j]$.
    We shift the indices by $-i-1$ to set the smallest index to 1. 
    Therefore, by \Cref{lemma: rescaling q partition is cool}, \Cref{eq: case 2 in diff of q's} can be expressed as 
    \begin{align}
        \sum_{\substack{\sigma\in B \\ \sigma=s_{i+1}\sigma_1'}} -(-1)^{\ell(\sigma_1')}\qpartition{\alpha_{i+2,i+j}-\sum_{x\in\mathcal{I}_1}\alpha_x}=-\sum_{\sigma\in B'} (-1)^{\ell(\sigma)}\qpartition{\alpha_{1,j-1}-\sum_{x\in\mathcal{I}_1'}\alpha_x},\label{XXXX} 
    \end{align}
    where $B'$ and $\mathcal{I}_1'$ denote the sets $B$ and $\mathcal{I}_1$ respectfully after the shift in indices. Moreover, notice that after the shift, $\sigma\in B'$ implies $\sigma=\prod_{x\in\mathcal{I}_1'}s_x$, where $\mathcal{I}_1'$ is a set of nonconsecutive integers in the interval $[2,j-2]$.
    
    Now consider $m_q(\hr_{j-1},0)$ in rank $j-1$. By \Cref{pamela alternation result}, $\sigma\in \mathcal{A}_{j-1}(\hr_{j-1,0})$ if and only if $\sigma=\prod_{x\in\mathcal{I}}s_x$, where $\mathcal{I}$ is a set of nonconsecutive integers in the interval $[2,j-2]$. Therefore, by \Cref{lemma: alphas get spit out}, we can express $m_q(\hr_{j-1},0)$ as
    \begin{align}
        m_q(\hr_{j-1},0) &= \sum_{\sigma\in W}(-1)^{\ell(\sigma)}\qpartition{\sigma(\hr_{j-1}+\rho_{j-1})-\rho_{j-1}} \notag \\
        &= \sum_{\sigma\in\mathcal{A}_{j-1}(\hr_{j-1},0)}(-1)^{\ell(\sigma)}\qpartition{\hr_{j-1}-\sum_{x\in\mathcal{I}}\alpha_x}.\label{extentsion: case 2 pamela match}
    \end{align}
    Now we show that \Cref{XXXX} is equivalent to \Cref{extentsion: case 2 pamela match}, despite these expressions being in different ranks. To prove this, we first show that the sum contains an equivalent number of terms to the size of the alternation set $\mathcal{A}_{j-1}(\hr_{j-1},0)$, and that every term in \Cref{XXXX} has a unique corresponding term in \Cref{extentsion: case 2 pamela match}, which proves the claim. First, recall that by \Cref{pamela alternation result},
    \begin{equation}
        |\mathcal{A}_{j-1}(\hr_{j-1},0)|=F_{j-1},\notag
    \end{equation}
    and by \Cref{thm:whats in the alt set}, if $\sigma\in B'$ then $\sigma=1$ or $\sigma=\prod_{x\in\mathcal{I}_1'}s_x$, where $\mathcal{I}_1'$ is a set of nonconsecutive integers in the interval $[2,j-2]$. Notice that this is exactly equivalent to choosing nonconsecutive integers in the interval $[j-3]$, which is counted by $F_{j-1}$. This implies $|B'|=F_{j-1}$, and hence both sums have an equivalent number of terms. Now we show that every term in \Cref{XXXX} corresponds uniquely to a term in \Cref{extentsion: case 2 pamela match}. First, consider the term indexed by the identity element in \Cref{XXXX}, the corresponding term is of the form $(-1)^{\ell(1)}\qpartition{\alpha_{1,j-1}}$. By \Cref{lemma: rescaling q partition is cool}, 
    \begin{equation}
       (-1)^{\ell(1)}\qpartition{\alpha_{1,j-1}-0}=\qpartition{\alpha_{1,j-1}} = q(q+1)^{j-2}. \notag
    \end{equation}
    Likewise, the term indexed by the identity element in $m_q(\hr_{j-1},0)$ is
    \begin{equation}
        (-1)^{\ell(1)}\qpartition{1(\hr_{j-1}+\rho_{j-1})-\rho_{j-1}}= \qpartition{\alpha_{1,j-1}}=q(q+1)^{j-2}.\notag
    \end{equation}
    Therefore, the terms indexed by the identity elements are equal. Now consider any arbitrary non-identity element $\sigma\in B'$ with corresponding term in \Cref{XXXX} given by
    \begin{equation*}
        (-1)^{\ell(\sigma)}\qpartition{\alpha_{1,j-1}-\sum_{x\in\mathcal{I}_1'}\alpha_x},
    \end{equation*}
    with $\mathcal{I}_1'$ a set of nonconsecutive integers in the interval $[2,j-2]$. By \Cref{pamela alternation result}, this is equivalent to a unique $\sigma^*\in\mathcal{A}_{j-1}(\hr_{j-1},0)$, using the same indices in $\mathcal{I}_1'$ for the simple reflections as in $\sigma$, but simple reflections in the Weyl group in rank $j-1$. This implies
    \begin{equation}
        (-1)^{\ell(\sigma)}\qpartition{\alpha_{1,j-1}-\sum_{x\in\mathcal{I}'_1}\alpha_x},\notag
    \end{equation}
    with $\mathcal{I}'_1$  a set of nonconsecutive integers in the interval $[2,j-2]$. By \Cref{pamela alternation result}, this is equivalent to a unique $\sigma^*\in\mathcal{A}_{j-1}(\hr_{j-1},0)$, using the same indices in $\mathcal{I}_1'$ for the simple reflections as in $\sigma$, but simple reflections in the Weyl group in rank $j-1$. This implies 
    \begin{equation}
        (-1)^{\ell(\sigma^*)}\qpartition{\hr_{j-1}-\sum_{x\in\mathcal{I}}\alpha_x}=(-1)^{\ell(\sigma)}\qpartition{\alpha_{1,j-1}-\sum_{x\in\mathcal{I}_1'}\alpha_x}.\notag
    \end{equation}
    Since our choice of non-identity element $\sigma\in B'$ was arbitrary, every term in \Cref{XXXX} corresponds to a unique term in $m_q(\hr_{j-1},0)$, and so these two equations are equivalent. Therefore, by \Cref{eq: pamela result}, 
    \begin{equation}
        \sum_{\sigma\in B} (-1)^{\ell(\sigma)}\qpartition{\sigma(\hr_r+\rho_r)-\rho_r-\alpha_{1,i}-\alpha_{i+j+1,r}}= - m_q(\hr_{j-1},0) = - \sum_{t=1}^{j-1}q^t.\label{DIFF OF EQS SUM 2}
    \end{equation}
    
    Consider the sum $\sum_{\sigma\in C}(-1)^{\ell(\sigma)}\qpartition{\sigma(\hr_r+\rho_r)-\rho_r-\alpha_{1,i}-\alpha_{i+j+1,r}}$ in \Cref{turn this into pamela result}. 
    For any arbitrary $\sigma=s_{i+j}\sigma_1'$, the input of Konstant's partition function can be reduced to a partition function on a positive root minus a sum of simple roots. By \Cref{lemma: alphas get spit out}, for any $s_{i+j}\sigma_1'\in C$ where $s_{i+j}\sigma_1'=s_{i+j}\prod_{x\in\mathcal{I}_1}s_x$ with $\mathcal{I}_1$ a set of nonconsecutive integers in the interval $[i+2,i+j-2]$, we have that
    \begin{align}
        &(-1)^{\ell(\sigma)}\qpartition{\sigma(\hr_r+\rho_r)-\rho_r-\alpha_{1,i}-\alpha_{i+j+1,r}} \notag\\
        &\hspace{5cm}=(-1)^{\ell(s_{i+j}\sigma_1')}\qpartition{\hr_r-\alpha_{1,i}-\alpha_{i+j}-\alpha_{i+j+1,r}-\sum_{x\in\mathcal{I}_1}\alpha_x} \notag \\
        &\hspace{5cm}= (-1)^{\ell(s_{i+j}\sigma_1')}\qpartition{\alpha_{i+1,i+j-1}-\sum_{x\in\mathcal{I}_1}\alpha_x}. \notag
    \end{align}
    Note that $\sum_{x\in\mathcal{I}_1}\alpha_x=0$ if $\sigma'_1=1$.
    Since $\ell(s_{i+j}\sigma_1')=\ell(s_{i+j})+\ell(\sigma_1')=1+\ell(\sigma_1')$, 
    the sum corresponding to the set $C$ can be expressed as
    \begin{equation}
    \sum_{\sigma\in C}(-1)^{\ell(\sigma)}\qpartition{\sigma(\hr_r+\rho_r)-\rho_r-\alpha_{1,i}-\alpha_{i+j+1,r}}=-\sum_{\substack{\sigma\in C \\ \sigma=s_{i+j}\sigma_1'}} (-1)^{\ell(\sigma_1')}\qpartition{\alpha_{i+1,i+j-1}-\sum_{x\in\mathcal{I}'_1}\alpha_x}.\label{case3 i}
    \end{equation}
    Observe that every index on the simple roots appearing in the input of Kostant's partition function in \Cref{case3 i} have the form $i+c$ for some $c\in[j-1]$.
    We shift every index on the simple roots by $-i$ to set the smallest index to 1. Therefore, by \Cref{lemma: rescaling q partition is cool}, \Cref{case3 i} can be expressed as
    \begin{equation}
        -\sum_{\substack{\sigma\in C \\ \sigma=s_{i+j}\sigma_1'}} (-1)^{\sigma_1'}\qpartition{\alpha_{i+1,i+j-1}-\sum_{x\in\mathcal{I}_1}\alpha_x} = - \sum_{\sigma\in C'} (-1)^{\ell(\sigma)}\qpartition{\alpha_{1,j-1}-\sum_{x\in\mathcal{I}_1'}\alpha_x},\label{C'}
    \end{equation}
    where $C'$ and $\mathcal{I}_1'$ denote the sets $C$ and $\mathcal{I}_1$ respectfully after the shift in indices. 
    Moreover, notice that after the shift, $\sigma\in C'$ implies $\sigma = \prod_{x\in\mathcal{I}'_1}s_x$, where $\mathcal{I}'_1$ is a set of nonconsecutive integers in the interval $[2,j-2]$.

    Now consider $m_q(\hr_{j-1},0)$ in rank $j-1$. By \Cref{pamela alternation result}, $\sigma\in\mathcal{A}_{j-1}(\hr_{j-1},0)$ if and only if $\sigma=\prod_{x\in\mathcal{I}}s_x$, where $\mathcal{I}$ is a set of nonconsecutive integers in the interval $[2,j-2]$. Therefore, by \Cref{lemma: alphas get spit out}, we can express $m_q(\hr_{j-1},0)$ as
    \begin{align}
        m_q(\hr_{j-1},0) &= \sum_{\sigma\in W}(-1)^{\ell(\sigma)}\qpartition{\sigma(\hr_{j-1}+\rho_{j-1})-\rho_{j-1}}\notag \\
        &= \sum_{\sigma\in\mathcal{A}_{j-1}(\hr_{j-1},0)}(-1)^{\ell(\sigma)}\qpartition{\hr_{j-1}-\sum_{x\in\mathcal{I}}\alpha_x}.\label{zero weight mult for C'}
    \end{align}
    Now we show that \Cref{C'} is equivalent to \Cref{zero weight mult for C'}, despite these expressions being in different ranks. To prove this, we first show that the sum contains an equivalent number of terms to the size of the alternation set $\mathcal{A}_{j-1}(\hr_{j-1},0)$, and that every term in \Cref{C'} has a unique corresponding term in \Cref{zero weight mult for C'}, which proves the claim. First, recall that by \Cref{pamela alternation result},
    \begin{equation*}
        |\mathcal{A}_{j-1}(\hr_{j-1},0)|=F_{j-1},
    \end{equation*}
    and by \Cref{thm:whats in the alt set}, if $\sigma\in C'$ then $\sigma=\prod_{x\in\mathcal{I}_1'}s_x$, where $\mathcal{I}_1'$ is a set of nonconsecutive integers in the interval $[2,j-2]$. Notice that this is exactly equivalent to choosing nonconsecutive integers in the interval $[j-3]$, which is counted by $F_{j-1}$. This implies $|C'|=F_{j-1}$, and hence both sums have an equivalent number of terms. Now we show that every term in \Cref{C'} corresponds uniquely to a term in \Cref{zero weight mult for C'}. Consider the term indexed by the identity element in \Cref{C'}; the corresponding term is of the form $(-1)^{\ell(1)}\qpartition{\alpha_{1,j-1}-0}$. By \Cref{lemma: alphas get spit out},
    \begin{equation*}
        (-1)^{\ell(1)}\qpartition{\alpha_{1,j-1}-0} = \qpartition{\alpha_{1,j-1}} = q(q+1)^{j-2}.
    \end{equation*}
    Likewise, the term indexed by the identity element in $m_q(\hr_{j-1},0)$ is 
    \begin{equation*}
        (-1)^{\ell(1)}\qpartition{1(\hr_{j-1}+\rho_{j-1}-\rho_{j-1})}=\qpartition{\alpha_{1,j-1}} = q(q+1)^{j-2}.
    \end{equation*}
    Therefore, the terms indexed by the identity elements are equal. Now consider any arbitrary non-identity element $\sigma\in C'$ with corresponding term in \Cref{C'} given by
    \begin{equation}
        (-1)^{\ell(\sigma)}\qpartition{\alpha_{1,j-1}-\sum_{x\in\mathcal{I}_1'}\alpha_x},\notag
    \end{equation}
    with $\mathcal{I}_1'$ a set of nonconsecutive integers in the interval $[2,j-2]$. By \Cref{pamela alternation result}, this is equivalent to a unique $\sigma^*\in\mathcal{A}_{j-1}(\hr_{j-1},0)$, using the same indices in $\mathcal{I}_1'$ for the simple reflections as in $\sigma$, but simple reflections in the Weyl group in rank $j-1$. This implies 
    \begin{equation*}
        (-1)^{\ell(\sigma^*)}\qpartition{\alpha_{1,j-1}-\sum_{x\in\mathcal{I}}\alpha_x}=(-1)^{\ell(\sigma)}\qpartition{\alpha_{1,j-1}-\sum_{x\in\mathcal{I}_1'}\alpha_x}.
    \end{equation*}
    Since our choice of non-identity element $\sigma\in C'$ was arbitrary, every term in \Cref{C'} corresponds to a unique term in $m_q(\hr_{j-1},0)$, and the two equations are equivalent. Therefore, by \Cref{eq: pamela result}, 
    \begin{equation}
        -\sum_{\sigma\in C'}(-1)^{\ell(\sigma_1')}\qpartition{\alpha_{1,j-1}-\sum_{x\in\mathcal{I}_1'}\alpha_x} = -m_q(\hr_{j-1},0) = - \sum_{t=1}^{j-1} q^t.\label{DIFF OF EQS SUM 3}
    \end{equation}

    Consider the sum $\sum_{\sigma\in D}(-1)^{\ell(\sigma)}\qpartition{\sigma(\hr_r+\rho_r)-\rho_r-\alpha_{1,i}-\alpha_{i+j+1,r}}$ in \Cref{turn this into pamela result}.  
    For any arbitrary $\sigma=s_{i+1}s_{i+j}\prod_{x\in\mathcal{I}_1}s_x$, the input of Kostant's partition function can be reduced to a partition function on a positive root minus a sum of simple roots. By \Cref{lemma: alphas get spit out}, for any $s_{i+1}s_{i+j}\prod_{x\in\mathcal{I}_1}s_x$ with $\mathcal{I}_1$ a set of nonconsecutive integers in the interval $[i+3,i+j-2]$, we have that
    \begin{align}
        &(-1)^{\ell(\sigma)}\qpartition{\sigma(\hr_r+\rho_r)-\rho_r-\alpha_{1,i}-\alpha_{i+j+1,r}} \notag\\
        &\hspace{4cm}= (-1)^{\ell(s_{i+1}s_{i+j}\sigma_1')}\qpartition{\hr_r-\alpha_{1,i}-\alpha_{i+1}-\alpha_{i+j}-\alpha_{i+j+1,r}-\sum_{x\in\mathcal{I}_1}\alpha_x} \notag\\
        &\hspace{4cm}= (-1)^{\ell(s_{i+1}s_{i+j}\sigma_1')}\qpartition{\alpha_{i+2,i+j-1}-\sum_{x\in\mathcal{I}_1}\alpha_x}.\notag
    \end{align}
    Note that $\sum_{x\in\mathcal{I}_1}\alpha_x=0$ if $\sigma_1'=1$.
    Since $\ell(s_{i+1}s_{i+j}\sigma_1')=\ell(s_{i+1})+\ell(s_{i+j}) + \ell(\sigma_1')=2+\ell(\sigma_1')$, the sum corresponding to the set $D$ can be expressed as,
    \begin{equation}
         \sum_{\sigma\in D} (-1)^{\ell(\sigma)}\qpartition{\alpha_{i+2,i+j-1}-\sum_{x\in\mathcal{I}_1}\alpha_x} = \sum_{\substack{\sigma\in D \\ \sigma=s_{i+1}s_{i+j}\sigma_1'}}(-1)^{\ell(\sigma_1')}\qpartition{\alpha_{i+2,i+j-1}-\sum_{x\in\mathcal{I}_1}\alpha_x}.\label{D'} 
    \end{equation}
    Observe that every index on the simple roots appearing as an input in \Cref{D'} has the form $i+c$ for some $c\in[2,j-1]$. We shift the indices by $-i-1$ to set the smallest index to 1. Therefore, by \Cref{lemma: rescaling q partition is cool}, \Cref{D'} can be expressed as
    \begin{equation}
        \sum_{\substack{\sigma\in D \\ \sigma=s_{i+1}s_{i+j}\sigma_1'}} (-1)^{\ell(\sigma_1')}\qpartition{\alpha_{i+2,i+j-1}-\sum_{x\in\mathcal{I}_1}\alpha_x} = \sum_{\sigma\in D'}(-1)^{\ell(\sigma)}\qpartition{\alpha_{1,j-2}-\sum_{x\in\mathcal{I}'_1}\alpha_x},\notag
    \end{equation}
    where $D'$ and $\mathcal{I}_1'$ denote the sets $D$ and $\mathcal{I}_1$ after the shift in indices. Moreover, notice that after the shift, $\sigma\in D'$ implies $\sigma=\prod_{x\in\mathcal{I}_1'}s_x$, where $\mathcal{I}_1'$ is a set of nonconsecutive integers in the interval $[2,j-3]$.

    Now consider $m_q(\hr_{j-2},0)$ in rank $j-2$. By \Cref{pamela alternation result}, $\sigma\in\mathcal{A}_{j-2}(\hr_{j-2},0)$ if and only if $\sigma=\prod_{x\in\mathcal{I}}s_x$, where $\mathcal{I}$ is a set of nonconsecutive integers in the interval $[2,j-2]$. Therefore, by \Cref{lemma: alphas get spit out}, we can express $m_q(\hr_{j-2},0)$ as
    \begin{align}
        m_q(\hr_{j-2},0) &= \sum_{\sigma \in W}(-1)^{\ell(\sigma)}\qpartition{\sigma(\hr_{j-2}+\rho_{j-2})-\rho_{j-2}} \notag \\
        &= \sum_{\sigma\in\mathcal{A}_{j-2}(\hr_{j-2},0)}(-1)^{\ell(\sigma)}\qpartition{\hr_{j-2}-\sum_{x\in\mathcal{I}}\alpha_x}\label{zero weight for D'}.
    \end{align}
    Now we show that \Cref{D'} is equivalent to \Cref{zero weight for D'}, despite these expressions being in different ranks. To prove this, we first show that the sum contains an equivalent number of terms to the size of the alternation set $\mathcal{A}_{j-2}(\hr_{j-2},0)$, and that every term in \Cref{D'} corresponds uniquely to a term in \Cref{zero weight for D'}. First consider the term indexed by the identity element in \Cref{D'}; the corresponding term is of the form $(-1)^{\ell(1)}\qpartition{\alpha_{1,j-2}-0}$. By \Cref{lemma: alphas get spit out},
    \begin{equation*}
        (-1)^{\ell(1)}\qpartition{\alpha_{1,j-2}-0}=\qpartition{\alpha_{1,j-2}}=q(q+1)^{j-3}.
    \end{equation*}
    Likewise, the term indexed by the identity element in $m_q(\hr_{j-2},0)$ is
    \begin{equation*}
        (-1)^{\ell(1)}\qpartition{\hr_{j-2}}=q(q+1)^{j-3}.
    \end{equation*}
    Therefore the terms indexed by the identity element are equal. Now consider any arbitrary non-identity element $\sigma\in D'$ with corresponding term in \Cref{D'} given by,
    \begin{equation*}
        (-1)^{\ell(\sigma)}\qpartition{\alpha_{1,j-2}-\sum_{x\in\mathcal{I}_1'}\alpha_x},
    \end{equation*}
    with $\mathcal{I}_1'$ a set of nonconsecutive integers in the interval $[2,j-3]$. By \Cref{pamela alternation result}, this is equivalent to a unique $\sigma^*\in\mathcal{A}_{j-2}(\hr_{j-2},0)$, using the same indices in $\mathcal{I}_1'$ for the simple reflections as in $\sigma$, but simple reflections in the Weyl group in rank $j-2$. This implies
    \begin{equation*}
        (-1)^{\ell(\sigma^*)}\qpartition{\hr_{j-2}-\sum_{x\in\mathcal{I}}\alpha_x}=(-1)^{\ell(\sigma)}\qpartition{\alpha_{1,j-2}-\sum_{x\in\mathcal{I}_1'}\alpha_x}.
    \end{equation*}
    Since our choice of non-identity element $\sigma\in D'$ was arbitrary, every term in \Cref{D'} corresponds to a unique term in $m_q(\hr_{j-2},0)$, and the two equations are equivalent. Therefore, by \Cref{eq: pamela result},
    \begin{equation}
        \sum_{\sigma\in D'}(-1)^{\ell(\sigma_1')}\qpartition{\alpha_{1,j-2}-\sum_{x\in\mathcal{I}'_1}\alpha_x} = m_q(\hr_{j-2},0) = \sum_{t=1}^{j-2}q^t.\label{DIFF OF EQS SUM 4}
    \end{equation}
    We are now ready to prove the claim. By substituting \Cref{DIFF OF EQS SUM 1}, \Cref{DIFF OF EQS SUM 2}, \Cref{DIFF OF EQS SUM 3}, and \Cref{DIFF OF EQS SUM 4} into \Cref{turn this into pamela result}, the weight $q$-multiplicity $m_q(\hr_r,\alpha_{1,i}+\alpha_{i+j+1,r})$ can be expressed as,
    \begin{align}
         m_q(\hr,\alpha_{1,i}+\alpha_{i+j+1,r}) &= \sum_{t=1}^j q^t -2\sum_{t=1}^{j-1} q^t + \sum_{t=1}^{j-2} q^t\notag \\
         &=q^j+q^{j-1}-2q^{j-1}+\sum_{t=1}^{j-2}\left(q^t-2q^t+q^t\right) \notag\\
         &=q^j-q^{j-1},\notag
    \end{align}
    for any $j\geq 3$, 
    which completes the proof.
\end{proof}
\section{The $q$-analog of Kostant's Weight Multiplicity Formula for sums of distinct simple roots }\label{section: 4 main results}
In this section, we let $I$ be any nonempty subset of $[r]$. We begin by showing that the terms of the weight $q$-multiplicity $m_q(\hr,\alpha_I)$ can be expressed as a product of sums similar to that of \Cref{phase 1 turn into rank j result}. 
The key detail in the proof of \Cref{thm: difference of q's} was converting the problem into a sum of weight $q$-multiplicities in lower ranks whose outputs are known results.  We employ a similar strategy for computing $m_q(\hr,\alpha_I)$ for any nonempty indexing set $I\subseteq [r]$.  
Specifically, we convert $m_q(\hr,\alpha_I)$ into a product of weight $q$-multiplicities  described in \Cref{thm: difference of q's} and \Cref{kim result} in lower ranks. The necessity of \Cref{kim result} is entirely dependent on the existence of $1$ or $r$ in the indexing set $I$. 
Throughout this section, we consider multiple ranks at one time. To avoid confusion, we decorate $\hr$, $\rho$, and $\mathcal{A}$ with a subscript to denote the rank.
Now we provide two technical lemmas that allows us to express $m_q(\hr,\alpha_I)$ as a product of weight $q$-multiplicities.
\begin{lemma}\label{lemma: mu a sum of positive roots turns KWMF into a product}
    Let $\hr$ denote the highest root in $\speclin$. Fix a nonempty indexing set $I\subseteq [r]$ with interval partition $I=\bigsqcup_{x=1}^{n(I)}[i_x,j_x]$. If $I^c=  
    \bigsqcup_{x=1}^{n(I^c)}[i'_x,j'_x]$ with $n(I^c)$ as in \Cref{def:n(I^c)}, then 
    \begin{equation}\label{eq:KWMF into a product}
        m_q(\hr_r,\alpha_I)=\prod_{x=1}^{n(I^c)}\sum_{\substack{\sigma\in\mathcal{A}(\hr,\alpha_I) \\ \sigma=\sigma_x}}(-1)^{\ell(\sigma_x)}\qpartition{\alpha_{i'_x,j'_x}-\sum_{i\in\mathcal{I}_x}\alpha_i},
    \end{equation}
    where, for each $x\in [n(I^c)]$, $\mathcal{I}_x=\{x_1,x_2,\ldots,x_m\}$ is a (possibly empty) set of nonconsecutive integers in the interval $[i'_x,j'_x]$, $\sigma_x=\prod_{i\in \mathcal{I}_x}s_i$, and we set $\sum_{i\in\mathcal{I}_x}\alpha_i=0$ if $\sigma_x=1$.  
\end{lemma}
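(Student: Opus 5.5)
The plan is to restrict the alternating sum to the alternation set, recognize that each surviving term factors over the intervals of $I^c$, and then expand the product of sums.

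By \Cref{thm:whats in the alt set}, only elements of $\mathcal{A}(\hr,\alpha_I)$ contribute to $m_q(\hr_r,\alpha_I)$. Each such $\sigma$ has the form $\prod_{i\in\mathcal{I}}s_i$ with $\mathcal{I}$ a set of nonconsecutive integers in $I^c\setminus\{1,r\}$. Using the interval partition $I^c=\bigsqcup_{x=1}^{n(I^c)}[i'_x,j'_x]$, we split $\mathcal{I}=\bigsqcup_x \mathcal{I}_x$ with $\mathcal{I}_x=\mathcal{I}\cap[i'_x,j'_x]$. This gives a factorization $\sigma=\sigma_1\sigma_2\cdots\sigma_{n(I^c)}$, exactly as in the proof of \Cref{size of the alt set}.

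Conversely, any tuple $(\sigma_1,\dots,\sigma_{n(I^c)})$ in which each $\sigma_x$ lies in the alternation set and uses only indices from $[i'_x,j'_x]$ assembles into an element of $\mathcal{A}(\hr,\alpha_I)$. The reason is that distinct blocks of $I^c$ are separated by at least one element of $I$, so indices from different blocks are never consecutive. Thus we obtain a bijection between $\mathcal{A}(\hr,\alpha_I)$ and the Cartesian product of the sets $\{\sigma\in\mathcal{A}(\hr,\alpha_I):\sigma=\sigma_x\}$.

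Next we factor each term. The $s_i$ involved pairwise commute, so $\ell(\sigma)=|\mathcal{I}|=\sum_x\ell(\sigma_x)$, and the sign factors as $\prod_x(-1)^{\ell(\sigma_x)}$. By \Cref{lemma: alphas get spit out},
\[\sigma(\hr+\rho)-\rho-\alpha_I=\alpha_{I^c}-\sum_{i\in\mathcal{I}}\alpha_i=\sum_{x=1}^{n(I^c)}\Big(\alpha_{i'_x,j'_x}-\sum_{i\in\mathcal{I}_x}\alpha_i\Big).\]
Each summand $\alpha_{i'_x,j'_x}-\sum_{i\in\mathcal{I}_x}\alpha_i$ is a nonnegative combination of simple roots supported in $[i'_x,j'_x]$, and the supports of different summands are separated by indices in $I$ where the coefficient is $0$. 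No positive root can cross an index with zero coefficient, so every partition of the total splits uniquely into partitions of the pieces, and the number of roots used adds. This is the argument of \Cref{lemma: split up partition function}, extended from intervals to arbitrary nonnegative vectors with disjoint, non-adjacent supports. It gives $\wp_q$ of the total as $\prod_x\wp_q\big(\alpha_{i'_x,j'_x}-\sum_{i\in\mathcal{I}_x}\alpha_i\big)$, since $q$-exponents add under concatenation.

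Finally, summing over the product set and applying the distributive law turns the sum of products into the product of sums in \eqref{eq:KWMF into a product}. The main obstacle is the partition-function factorization. \Cref{lemma: split up partition function} is only stated for $\alpha_I$ itself, so I would re-prove it in the form needed here: a root $\alpha_{a,b}$ used in a partition must satisfy $[a,b]\subseteq\operatorname{supp}$, so it lies inside a single block. A secondary care point is the blocks containing $1$ or $r$, where $s_1,s_r$ are excluded. These are handled by the convention that $\mathcal{I}_x$ ranges only over indices allowed by \Cref{thm:whats in the alt set}, i.e.\ within $[i'_x,j'_x]\setminus\{1,r\}$.
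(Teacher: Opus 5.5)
Your proposal is correct and follows essentially the same route as the paper: you restrict to the alternation set of \Cref{thm:whats in the alt set}, factor each term's sign by additivity of length and its $\wp_q$ over the blocks of $I^c$, and then turn the sum of products into a product of sums. Your explicit bijection between $\mathcal{A}(\hr,\alpha_I)$ and the Cartesian product of the blockwise sets, followed by one application of the distributive law, is a cleaner version of the paper's iterated factoring of common $\sigma_m$ terms; your remarks on excluding $1,r$ and on the partition-function factorization fill in points the paper leaves implicit (since $\alpha_{I^c}-\sum_{i\in\mathcal{I}}\alpha_i=\alpha_{I^c\setminus\mathcal{I}}$, \Cref{lemma: split up partition function} could also be applied directly and then regrouped).
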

\begin{proof}
    By \Cref{thm:whats in the alt set}, $\sigma\in\thealtset$ if and only if $\sigma$ is of the form
    \[\sigma=1 \quad \text{ and } \quad \sigma= \sigma_1\sigma_2\ldots\sigma_{n(I^c)},\]
    where, for each $x\in [n(I^c)]$, $\mathcal{I}_x$ is a (possibly empty) set of nonconsecutive integers in the interval $[i_x',j_x']$, and $\sigma_x=\prod_{i\in\mathcal{I}_x}s_i$. 
    Let $\sigma=\sigma_1\sigma_2\cdots\sigma_{n(I^c)}$ be any arbitrary element of the Weyl alternation set, and consider Kostant's partition function in the corresponding term in $m_q(\hr,\alpha_I)$:
    \begin{equation}
        \qpartition{\sigma(\hr_r+\rho_r)-\rho_r-\alpha_I} = \qpartition{\hr_r - \alpha_{I}-\sum_{x\in[n(I^c)]}\sum_{i\in\mathcal{I}_x}\alpha_i} = \qpartition{\alpha_{I^c}-\sum_{x\in[n(I^c)]}\sum_{i\in\mathcal{I}_x}\alpha_i}.\notag
    \end{equation}
    Since $I^c = \bigsqcup_{x=1}^{n(I^c)}[i'_x,j'_x]$, 
    \begin{align}
        &\qpartition{\alpha_{I^c}-\sum_{x\in[n(I^c)]}\sum_{i\in\mathcal{I}_x}\alpha_i}\notag\\
        &\qquad \qquad = \qpartition{\left(\alpha_{i'_1,j'_1}-\sum_{i\in\mathcal{I}_1}\alpha_i\right)+\left(\alpha_{i'_2,j'_2}-\sum_{i\in\mathcal{I}_2}\alpha_i\right)+\cdots+\left(\alpha_{i'_{n(I^c)},j'_{n(I^c)}}-\sum_{i\in\mathcal{I}_{n(I^c)}}\alpha_i\right)}\notag \\
        &\qquad\qquad = \qpartition{\sum_{x=1}^{n(I^c)}\left(\alpha_{i'_x,j'_x}-\sum_{i\in\mathcal{I}_x}\alpha_i\right)}.\notag
    \end{align}
    By \Cref{lemma: split up partition function},
    \begin{align}
        \qpartition{\sum_{x=1}^{n(I^c)}\left(\alpha_{i'_x,j'_x}-\sum_{i\in\mathcal{I}_x}\alpha_i\right)}
        &=\prod_{x=1}^{n(I^c)}\qpartition{\alpha_{i'_x,j'_x}-\sum_{i\in\mathcal{I}_x}\alpha_i}.\label{break into product 1}
    \end{align}
    Additionally, for any arbitrary $\sigma=\sigma_1\sigma_2\cdots\sigma_{n(I^c)}$, since $\sigma_x$ contains simple roots whose indices are disjoint from every other simple root in $\sigma_y$, then
    \begin{equation}
        \ell(\sigma) = \ell(\sigma_1\sigma_2\cdots \sigma_{n(I^c)}) = \sum_{x=1}^{n(I^c)} \ell(\sigma_x),\notag
    \end{equation}
    which implies that
    \begin{equation}
        (-1)^{\ell(\sigma)} = \prod_{x=1}^{n(I^c)}(-1)^{\ell(\sigma_x)}.\label{negative one stuff}
    \end{equation}
    Therefore, since our choice of term in $m_q(\hr,\alpha_I)$ was arbitrary, by \Cref{break into product 1} and \Cref{negative one stuff}, the weight $q$-multiplicity satisfies,
    \begin{align}
        m_q(\hr_r,\alpha_I) &= \sum_{\substack{\sigma\in\thealtset \\ \sigma=\sigma_1\sigma_2\cdots \sigma_{n(I^c)}}}\prod_{x=1}^{n(I^c)}(-1)^{\ell(\sigma_x)}\qpartition{\alpha_{i'_x,j'_x}-\sum_{i\in\mathcal{I}_x}\alpha_i}.\label{break into product 2}
    \end{align}
    Notice that \Cref{break into product 2} is a sum of terms, each of the form,
    \[(-1)^{\ell(\sigma_1)}\qpartition{\alpha_{i'_1,j'_1}-\sum_{i\in\mathcal{I}_1}\alpha_i}\cdot(-1)^{\ell(\sigma_2)}\qpartition{\alpha_{i'_2,j'_2}-\sum_{i\in\mathcal{I}_2}\alpha_i} \cdots (-1)^{\ell(\sigma_{n(I^c)})}\qpartition{\alpha_{i'_{n(I^c)},j'_{n(I^c)}}-\sum_{i\in\mathcal{I}_{n(I^c)}}\alpha_i} \]
    corresponding to some element of the Weyl alternation set $\sigma=\sigma_1\sigma_2\cdots\sigma_{n(I^c)}$. Note that for any $x\in[n(I^c)]$, we set $\sum_{i\in\mathcal{I}_x}\alpha_i=0$ if $\sigma_x=1$. To complete the proof, we show that by grouping terms together through factoring common $\sigma_x$ factors, we can state $m_q(\hr_r,\alpha_I)$ as a product of sums in the desired form. 
    
    Let $m\in[n(I^c)]$, and consider $\sigma_m=\prod_{i\in\mathcal{I}_m}s_i$ where $\mathcal{I}_m$ is a (possibly empty) fixed collection of integers in the interval $[i'_m,j'_m]$. Consider every term in \Cref{break into product 2} containing a factor corresponding to $\sigma_m$. 
    By factoring, the collection of these terms can be expressed as
    \begin{equation}
        (-1)^{\ell(\sigma_m)}\qpartition{\alpha_{i'_m,j'_m}-\sum_{i\in\mathcal{I}_m}\alpha_i}\cdot \sum_{\substack{\sigma\in\thealtset\\ \sigma=\sigma_1\sigma_2\cdots\sigma_{m-1}\sigma_{m+1}\cdots\sigma_{n(I^c)}}}\prod_{\substack{x=1\\x\neq m}}^{n(I^c)} (-1)^{\ell(\sigma_x)}\qpartition{\alpha_{i'_x,j'_x}-\sum_{i\in\mathcal{I}_x}\alpha_i}.
    \end{equation}
    Therefore, by repeating this factorization for every collection of nonconsecutive integers $\mathcal{I}_m$ in the interval $[i'_m,j'_m]$, \Cref{break into product 2} can be expressed as
    \begin{align}
        &\sum_{\substack{\sigma\in\mathcal{A}(\hr,\alpha_I) \\ \sigma=\sigma_1\sigma_2\cdots\sigma_n}}\prod_{x=1}^{n(I^c)}(-1)^{\ell(\sigma_x
        )}\qpartition{\alpha_{i'_x,j'_x}-\sum_{i\in\mathcal{I}_x}\alpha_i}\notag \\
        &= \sum_{\substack{\sigma\in\thealtset\\ \sigma=\sigma_m}}\left((-1)^{\ell(\sigma_m)}\qpartition{\alpha_{i'_m,j'_m}-\sum_{i\in\mathcal{I}_m}\alpha_i}\cdot \sum_{\substack{\sigma\in\thealtset\\ \sigma=\sigma_1\sigma_2\cdots \sigma_{m-1}\sigma_{m+1}\cdots\sigma_n}}\prod_{\substack{x=1\\x\neq m}}^{n(I^c)} (-1)^{\ell(\sigma_x)}\qpartition{\alpha_{i'_x,j'_x}-\sum_{i\in\mathcal{I}_x}\alpha_i}\right).\label{eq: KWMF with sigma 1 factored out}
    \end{align}
    In \Cref{eq: KWMF with sigma 1 factored out}, we remove the common $\sigma_m$ factor from each grouping of terms, and express the equation as
    \begin{align}
        &\sum_{\substack{\sigma\in\thealtset\\ \sigma=\sigma_m}}\left((-1)^{\ell(\sigma_m)}\qpartition{\alpha_{i'_m,j'_m}-\sum_{i\in\mathcal{I}_m}\alpha_i}\cdot \sum_{\substack{\sigma\in\thealtset\\ \sigma=\sigma_1\sigma_2\cdots \sigma_{m-1}\sigma_{m+1}\cdots\sigma_n}}\prod_{\substack{x=1\\x\neq m}}^{n(I^c)} (-1)^{\ell(\sigma_x)}\qpartition{\alpha_{i'_x,j'_x}-\sum_{i\in\mathcal{I}_x}\alpha_i}\right)\notag \\
        &\qquad\qquad=\left(\sum_{\substack{\sigma\in\thealtset\\ \sigma=\sigma_m}}(-1)^{\ell(\sigma_m)}\qpartition{\alpha_{i'_m,j'_m}-\sum_{i\in\mathcal{I}_m}\alpha_i}\right)\notag\\
        &\hspace{4cm}\cdot\left(\sum_{\substack{\sigma\in\thealtset\\ \sigma=\sigma_1\cdots \sigma_{m-1}\sigma_{m+1}\cdots\sigma_n}}\prod_{\substack{x=1\\x\neq m}}^{n(I^c)} (-1)^{\ell(\sigma_x)}\qpartition{\alpha_{i'_x,j'_x}-\sum_{i\in\mathcal{I}_x}\alpha_i}\right).\label{break into product 3}
    \end{align}
    Notice that by \Cref{break into product 3}, we have expressed $m_q(\hr_r,\alpha_I)$ as a product, with every term corresponding to some $\mathcal{I}_m$ contained in a single factor.
    By repeating this factorization for every remaining $i\in[n(I^c)]$, the weight $q$-multiplicity can be stated as
    \[m_q(\hr,\alpha_I)=\prod_{x=1}^{n(I^c)}\sum_{\substack{\sigma\in\mathcal{A}(\hr,\alpha_I) \\ \sigma=\sigma_x}}(-1)^{\ell(\sigma_x)}\qpartition{\alpha_{i'_x,j'_x}-\sum_{i\in\mathcal{I}_x}\alpha_i},\]
    which completes the proof.
\end{proof}
Now we show that $m_q(\hr,\alpha_I)$, written in the product form as described in \Cref{lemma: mu a sum of positive roots turns KWMF into a product}, can be expressed as a product weight $q$-multiplicities.
\begin{theorem}\label{THE BEST THEOREM EVER}
    Let $\hr$ be the highest root in $\speclin$, and let $I\subseteq [r]$ be nonempty. Let $I^c = I_1 \sqcup I_2 \sqcup \cdots \sqcup I_{n(I^c)}$ as defined in \Cref{def: interval partition}. 
    \begin{itemize}
        \item If $I_1=[1,i_1-1]$ and $\sigma_1=\prod_{i\in\mathcal{I}_1}$ with $\mathcal{I}_1$ a set of nonconsecutive integers in the interval $I_1$, then
        \begin{equation}
            \sum_{\substack{\sigma\in\mathcal{A}_r(\hr_r,\alpha_I)\\\sigma=\sigma_1}}(-1)^{\ell(\sigma_1)}\qpartition{\alpha_{1,i_1-1}-\sum_{i\in\mathcal{I}_1}\alpha_i}=m_q(\hr_{i_1},\alpha_{i_1}).\label{pays the rent left edge case}
        \end{equation}
        \item For any $x\in[n(I^c)]$, if $I_x=[j_x+1,i_{x+1}-1]$ and $\sigma_x=\prod_{i\in\mathcal{I}_x}$ with $\mathcal{I}_x$ a set of nonconsecutive integers in the interval $I_x$, then
        \begin{equation}
        \sum_{\substack{\sigma\in\mathcal{A}_r(\hr_r,\alpha_I)\\\sigma=\sigma_x}}(-1)^{\ell(\sigma_x)}\qpartition{\alpha_{j_x+1,i_{x+1}-1}-\sum_{i\in\mathcal{I}_x}\alpha_i}=m_q(\hr_{i_{x+1}-j_x+1},\alpha_1+\alpha_{i_{x+1}-j_x+1}).\label{pays the rent middle cases}
        \end{equation}
        \item If $I_{n(I^c)}=[j_{n(I)}+1,r]$ and $\sigma_{n(I^c)}=\prod_{i\in\mathcal{I}_{n(I^c)}}s_i$ with $\mathcal{I}_{n(I^c)}$ a set of nonconsecutive integers in the interval $I_{n(I^c)}$, then 
        \begin{equation}
            \sum_{\substack{\sigma\in\mathcal{A}_r(\hr_r,\alpha_I)\\\sigma=\sigma_{n(I)}}}(-1)^{\ell(\sigma_{n(I)})}\qpartition{\alpha_{j_{n(I)}+1,r}-\sum_{i\in\mathcal{I}_{n(I)}}\alpha_i}=m_q(\hr_{r-j_{n(I)}+1},\alpha_1).\label{pays the rent right edge case}
        \end{equation}
    \end{itemize}
\end{theorem}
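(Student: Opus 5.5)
The plan is to compute the alternation set on each side with \Cref{thm:whats in the alt set} (or with \Cref{kim alternation result}) and then match the two sums term by term, the same way the sets $A,B,C,D$ were matched to $m_q(\hr_t,0)$ in the proof of \Cref{thm: difference of q's}.

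\textbf{Left-edge case \eqref{pays the rent left edge case}.}
\begin{itemize}
\item[(i)] On the left, the admissible $\mathcal{I}_1$ are the sets of nonconsecutive integers in $[2,i_1-1]$, since $1$ is excluded by \Cref{prop: no s1 sr}. Each term is $(-1)^{|\mathcal{I}_1|}\qpartition{\alpha_{1,i_1-1}-\sum_{i\in\mathcal{I}_1}\alpha_i}$.
\item[(ii)] In rank $i_1$ with $\mu=\alpha_{i_1}=\alpha_{i_1,i_1}$, \Cref{kim alternation result} says the alternation set consists of products $\prod_{i\in\mathcal{J}} s_i$ with $\mathcal{J}$ nonconsecutive in $[2,i_1-1]$. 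The interval $[i_1+1,i_1-1]$ is empty, so it contributes nothing. This is the same indexing family.
\item[(iii)] By \Cref{lemma: spit out the alphas}, each term there is $(-1)^{|\mathcal{J}|}\qpartition{\hr_{i_1}-\alpha_{i_1}-\sum_{i\in\mathcal{J}}\alpha_i}$, and $\hr_{i_1}-\alpha_{i_1}=\alpha_{1,i_1-1}$.
\item[(iv)] The two sums agree term by term, with identical indices and no shift needed.
\end{itemize}

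\textbf{Right-edge case \eqref{pays the rent right edge case}.} This is symmetric to the left edge.
\begin{itemize}
\item[(i)] Shift indices by $-j_{n(I)}$. The interval $[j_{n(I)}+1,r]$ becomes $[1,r-j_{n(I)}]$, and the allowed indices $[j_{n(I)}+2,r-1]$ become $[2,r-j_{n(I)}-1]$.
\item[(ii)] By \Cref{lemma: rescaling q partition is cool}, and more precisely by translation invariance of $\wp_q$ on these interval-supported inputs, each value $\qpartition{\alpha_{j_{n(I)}+1,r}-\sum\alpha_i}$ equals the shifted value.
\item[(iii)] In rank $R=r-j_{n(I)}+1$ with $\mu=\alpha_1$, \Cref{kim alternation result} with $i=j=1$ gives $\mathcal{J}$ nonconsecutive in $[2,R-1]=[2,r-j_{n(I)}]$.
\item[(iv)] Each term is $\qpartition{\hr_R-\alpha_1-\sum_{i\in\mathcal{J}}\alpha_i}$, with $\hr_R-\alpha_1=\alpha_{2,R}$. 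After a shift by one this becomes $\alpha_{1,R-1}$ with indices in $[1,R-2]$.
\item[(v)] \textbf{Mismatch.} Matching these to step (i) needs the index sets to correspond. Step (i) gives allowed indices $[2,R-2]$ after the shift, because $1$ is forbidden there. The rank-$R$ side has $\mathcal{J}\subseteq[2,R-1]$, which after its shift becomes $[1,R-2]$.
\end{itemize}
So the literal matching appears to fail, and I must double-check the intended normalization. I will first recheck which of $s_{j_{n(I)}+1}$ is forbidden. By \Cref{prop: no s1 sr}, only indices in $I\cup\{1,r\}$ are forbidden, so $s_{j_{n(I)}+1}$ is in fact allowed.

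This gives the left side index range $[j_{n(I)}+1,r-1]$, which shifts to $[1,R-2]$. That matches the rank-$R$ side exactly. The same care applies at the left edge: there the allowed range is $[2,i_1-1]$, which matches directly as computed.

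\textbf{Middle case \eqref{pays the rent middle cases}.}
\begin{itemize}
\item[(i)] Allowed indices are all of $[j_x+1,i_{x+1}-1]$, of length $L=i_{x+1}-j_x-1$.
\item[(ii)] Set $R=L+2=i_{x+1}-j_x+1$ and take $\mu=\alpha_1+\alpha_R$ in rank $R$.
\item[(iii)] By \Cref{thm:whats in the alt set}, that alternation set consists of products over nonconsecutive subsets of $[2,R-1]$, which has length $L$.
\item[(iv)] Each term is $\qpartition{\alpha_{2,R-1}-\sum\alpha_i}$.
\item[(v)] A shift by $j_x-1$ carries this bijectively onto the terms on the left, preserving signs since lengths agree.
\end{itemize}

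The main obstacle is bookkeeping rather than mathematics. I need to make the translation invariance of $\wp_q$ precise for inputs of the form $\alpha_{a,b}-\sum_{i\in\mathcal{I}}\alpha_i$, which are sums of disjoint intervals; \Cref{lemma: split up partition function} together with \Cref{lemma: rescaling q partition is cool} handles this. I also need to get the boundary indices exactly right in each of the three cases, since an off-by-one error in the excluded reflections would break the bijection.
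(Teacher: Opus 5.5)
Your proposal is correct and follows essentially the same route as the paper: identify the alternation set on each side (via \Cref{thm:whats in the alt set} and Harry's result \Cref{kim alternation result}), then match the terms bijectively after an index shift, with signs agreeing because both lengths equal $|\mathcal{I}|$. Your corrected right-edge range $[j_{n(I)}+1,r-1]$ is the right one, so delete the false start that excluded $s_{j_{n(I)}+1}$; your justification that $\wp_q$ is unchanged by the shift, via the splitting lemma and the rescaling lemma, is if anything more careful than the paper's bare appeal to shifting indices.
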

\begin{proof}
    Fix a nonempty $I\subseteq [r]$. 
    Suppose $I^c$ contains the interval $I_1=[1,i_1-1]$.
    To prove the claim in \Cref{pays the rent left edge case}, we show that the sum contains an equivalent number of terms to the size of the alternation set $\mathcal{A}_{i_1}(\hr_{i_1},\alpha_{i_1})$. Then we show that every term in the sum corresponds to a unique term with nonzero output in the weight $q$-multiplicity, thus showing equivalency despite being in different ranks.
    
    Consider the weight $q$-multiplicity $m_q(\hr_{i_1},\alpha_{i_1})$ in rank $i_1$. 
    By \Cref{kim alternation result}, $\sigma\in\mathcal{A}_{i_1}(\hr_{i_1},\alpha_{i_1})$ if and only if $\sigma=1$ or $\sigma=\prod_{i\in\mathcal{I}} s_i$, where $\mathcal{I}$ is a  collection of nonconsecutive integers in the interval $[2,i_1-1]$, and there are $F_{i_1}$ many such subsets. Moreover, by \Cref{kim alternation result} and \Cref{lemma: alphas get spit out}, $m_q(\hr_{i_1},\alpha_{i_1})$ can be expressed as
    \begin{align}
        m_q(\hr_{i_1},\alpha_{i_1}) &= \sum_{\sigma\in W} (-1)^{\ell(\sigma)}\qpartition{\sigma(\hr_{i_1}+\rho_{i_1})-\rho_{i_1}-\alpha_{i_1}} \notag\\
        &= \sum_{\sigma\in\mathcal{A}_{i_1}(\hr_{i_1},\alpha_{i_1})}(-1)^{\ell(\sigma)}\qpartition{\alpha_{1,i_1-1}-\sum_{i\in\mathcal{I}}\alpha_i}.\label{CASE 2 KIM RESULT IN THE RIGHT FORM}
    \end{align}
    Recall that for $\sigma_1\in\thealtset$, $\sigma_1=1$ or $\sigma_1=\prod_{i\in\mathcal{I}_1}s_i$, where $\mathcal{I}_1$ is a collection of nonconsecutive integers in the interval $[2,i_1-1]$, and there are $F_{i_1}$ many such subsets. Therefore, the sum in \Cref{pays the rent left edge case} 
    contains a number of terms equivalent to the size of the alternation set $\mathcal{A}_{i_1}(\hr_{i_1},\alpha_{i_1})$.

    Now we show that every term in the sum  corresponds uniquely to a term in \Cref{CASE 2 KIM RESULT IN THE RIGHT FORM}. First consider the term indexed by the identity element in the sum. By \Cref{lemma: rescaling q partition is cool},
    \begin{equation}
        (-1)^{\ell(1)}\qpartition{\alpha_{1,i_1-1}} = q(q+1)^{i_1-2}.\notag
    \end{equation}
    Likewise, the term indexed by the identity element in $m_q(\hr_{i_1},\alpha_{i_1})$ is also of the form
    \begin{equation}
        (-1)^{\ell(1)}\qpartition{\alpha_{1,i_1-1}}=q(q+1)^{i_1-2},\notag
    \end{equation}
    hence the terms indexed by the identity elements are of equal value. Now let $\sigma=\sigma_1$ such that $\ell(\sigma_1)>0$. The corresponding term in the sum is given by,
    \begin{equation}
        (-1)^{\ell(\sigma_1)}\qpartition{\alpha_{1,i_1-1}-\sum_{i\in\mathcal{I}_1}\alpha_i},\notag
    \end{equation}
    with $\mathcal{I}_1$ a set of nonconsecutive integers in the interval $[2,i_1-1]$. By \Cref{kim alternation result}, this is equivalent to a unique $\sigma^*\in\mathcal{A}_{i_1}(\hr_{i_1},\alpha_{i_1})$ using the same indices in $\mathcal{I}_1$ for the simple reflections as in $\sigma_1$, but simple reflections in the Weyl group in rank $i_1$. This implies 
    \begin{align}
        (-1)^{\ell(\sigma^*)}\qpartition{\alpha_{1,i_1-1}-\sum_{i\in\mathcal{I}}\alpha_i} = (-1)^{\ell(\sigma_1)}\qpartition{\alpha_{1,i_1-1}-\sum_{i\in\mathcal{I}_1}\alpha_i}.\notag
    \end{align}
    Since our choice of non-identity element was arbitrary, indeed every term in the sum corresponds to a unique term with nonzero output in $m_q(\hr_{i_1},\alpha_{i_1})$. Therefore, 
    \begin{equation}
        \sum_{\substack{\sigma\in\thealtset\\\sigma=\sigma_1}}(-1)^{\ell(\sigma_1)}\qpartition{\alpha_{1,i_1-1}-\sum_{i\in\mathcal{I}_1}\alpha_i}= m_q(\hr_{i_1},\alpha_{i_1}),\notag
    \end{equation}
    which proves the claim. 
    Now suppose $I^c$ contains the interval $[j_x+1,i_{x+1}-1]$ for any $x\in[n(I^c)]$.
    To prove the claim in \Cref{pays the rent middle cases}, we show that the sum contains an equivalent number of terms to the size of the alternation set $\mathcal{A}_r(\hr_{i_{x+1}-j_x+1},\alpha_1+\alpha_{i_{x+1}-j_x+1})$. 
    Then we show that every term in the sum corresponds to a unique term with nonzero output in the weight $q$-multiplicity, thus showing equivalency despite being in different ranks.
    
    Consider the weight $q$-multiplicity $m_q(\hr_{i_{x+1}-j_x+1},\alpha_1+\alpha_{i_{x+1}-j_x+1})$ in rank $i_{x+1}-j_x+1$.
    By \Cref{thm:whats in the alt set}, $\sigma\in\mathcal{A}_r(\hr_{i_{x+1}-j_x+1},\alpha_1+\alpha_{i_{x+1}-j_x+1})$ if and only if $\sigma=1$ or $\sigma=\prod_{i\in\mathcal{I}}s_i$ where $\mathcal{I}$ is a set of nonconsecutive integers in the interval $[2,i_{x+1}-j_x]$, and there are $F_{i_{x+1}-j_x+1}$ many such subsets. Moreover, by \Cref{lemma: alphas get spit out} and \Cref{thm:whats in the alt set}, the weight $q$-multiplicity can be expressed as 
    \begin{align}
        &m_q(\hr_{i_{x+1}-j_x+1},\alpha_1+\alpha_{i_{x+1}-j_x+1}) \notag\\
        &\qquad \qquad= \sum_{\sigma\in W}(-1)^{\ell(\sigma)}\qpartition{\sigma(\hr_{i_{x+1}-j_x+1}+\rho_{i_{x+1}-j_x+1})-\rho_{i_{x+1}-j_x+1}-\alpha_1-\alpha_{i_{x+1}-j_x+1}}\notag \\
        &\qquad \qquad= \sum_{\sigma\in\mathcal{A}_{i_{x+1}-j_x+1}(\hr_{i_{x+1}-j_x+1},\alpha_1+\alpha_{i_{x+1}-j_x+1})} (-1)^{\ell(\sigma)}\qpartition{\alpha_{2,i_{x+1}-j_x}-\sum_{x\in\mathcal{I}}\alpha_i}.\label{weight q mult in pay the rent}
    \end{align} 
    Recall that for $\sigma_x\in\thealtset$, $\sigma_x=1$ or $\sigma_x=\prod_{i\in\mathcal{I}_x}s_x$ where $\mathcal{I}_x$ is a collection of nonconsecutive integers in the interval $I_x=[j_x+1,i_{x+1}-1]$, and there are $F_{i_{x+1}-j_x+1}$ many such subsets. 
    Therefore the sum in \Cref{pays the rent middle cases} contains a number of terms equivalent to the size of the alternation set $\mathcal{A}_{i_{x+1}-j_x+1}(\hr_{i_{x+1}-j_x+1},\alpha_1+\alpha_{i_{x+1}-j_x+1})$.

    For the sum, we shift the indices on the simple roots appearing in the input of Kostant's partition function by $-j_x+1$ to set the smallest index to $2$. Hence the sum in \Cref{pays the rent middle cases} can be expressed as,
    \begin{equation}
        \sum_{\substack{\sigma\in\mathcal{A}_r(\hr_r,\alpha_I)\\\sigma=\sigma_x}}(-1)^{\ell(\sigma_x)}\qpartition{\alpha_{j_x+1,i_{x+1}-1}-\sum_{i\in\mathcal{I}_x}\alpha_i}=\sum_{\substack{\sigma\in\mathcal{A}_r(\hr_r,\alpha_I)\\\sigma=\sigma_x}}(-1)^{\ell(\sigma_x)}\qpartition{\alpha_{2,i_{x+1}-j_x}-\sum_{i\in\mathcal{I}'_x}\alpha_i},\label{silly sum}
    \end{equation}
    where $\mathcal{I}'_x$ denotes the set $\mathcal{I}_x$ after the shift in indices. 
    Now we show that every term in \Cref{silly sum} corresponds uniquely to a term in \Cref{weight q mult in pay the rent}.
    First, consider the term indexed by the identity element in \Cref{silly sum}. By \Cref{lemma: rescaling q partition is cool},
    \begin{equation}
        (-1)^{\ell(1)}\qpartition{\alpha_{2,i_{x+1}-j_x}-0}=\qpartition{\alpha_{2,i_{x+1}-j_x}}=q(q+1)^{i_{x+1}-j_x-2}.\notag
    \end{equation}
    Likewise, the term indexed by the identity element in $m_q(\hr_{i_{x+1}-j_x+1},\alpha_1+\alpha_{i_{x+1}-j_x+1})$ is also of the form 
    \begin{equation}
        (-1)^{\ell(1)}\qpartition{\alpha_{2,i_{x+1}-j_x}-0}=\qpartition{\alpha_{2,i_{x+1}-j_x}}=q(q+1)^{i_{x+1}-j_x-2},\notag
    \end{equation}
    hence the terms indexed by the identity elements are of equal value. Now let $\sigma_x\in\mathcal{A}_r(\hr_r,\alpha_I)$ such that $\ell(\sigma_x)>0$. The corresponding term in \Cref{silly sum} given by,
    \begin{equation}
        (-1)^{\ell(\sigma_x)}\qpartition{\alpha_{2,i_{x+1}-j_x}-\sum_{x\in\mathcal{I}_x'}\alpha_i},\notag
    \end{equation}
    with $\mathcal{I}_x'$ a set of nonconsecutive integers in the interval $[2,i_{x+1}-j_x]$.
    By \Cref{thm:whats in the alt set}, this is equivalent to a unique $\sigma^*\in\mathcal{A}_{i_{x+1}-j_x+1}(\hr_{i_{x+1}-j_x+1},\alpha_1+\alpha_{i_{x+1}-j_x+1})$ using the same indices in $\mathcal{I}'_x$ for the simple reflections as in $\sigma_x$, but simple reflections in the Weyl group in rank $i_{x+1}-j_x+1$. This implies 
    \begin{equation}
        (-1)^{\ell(\sigma^*)}\qpartition{\alpha_{2,i_{x+1}-j_x}-\sum_{x\in\mathcal{I}}\alpha_i} = (-1)^{\ell(\sigma_x)}\qpartition{\alpha_{2,i_{x+1}-j_x}-\sum_{i\in\mathcal{I}'_x}\alpha_i}.\notag
    \end{equation}
    Since our choice of non-identity element was arbitrary, indeed every term in \Cref{silly sum} corresponds to a unique term with nonzero output in \Cref{weight q mult in pay the rent}. Therefore, 
        \begin{equation}
        \sum_{\substack{\sigma\in\mathcal{A}_r(\hr_r,\alpha_I)\\\sigma=\sigma_x}}(-1)^{\ell(\sigma_x)}\qpartition{\alpha_{j_x+1,i_{x+1}-1}-\sum_{i\in\mathcal{I}_x}\alpha_i}=m_q(\hr_{i_{x+1}-j_x+1},\alpha_1+\alpha_{i_{x+1}-j_x+1}),\notag
    \end{equation}
    which proves the claim.
    Now suppose $I^c$ contains the interval $I_{n(I^c)}=[j_{n(I)}+1,r]$. To prove the claim in \Cref{pays the rent right edge case}, we show that the sum contains an equivalent number of terms to the size of the alternation set $\mathcal{A}_{r-j_{n(I)}+1}(\hr_{r-j_{n(I)}+1},\alpha_1)$. Then we show that every term in the sum corresponds to a unique term with nonzero output in the weight $q$-multiplicity, thus showing equivalency despite being in different ranks.

    Consider the weight $q$-multiplicity $m_q(\hr_{r-j_{n(I)}+1},\alpha_1)$ in rank $r-j_{n(I)}+1$. By \Cref{kim alternation result}, 
    \[\sigma\in\mathcal{A}_{r-j_{n(I)}+1}(\hr_{r-j_{n(I)}+1},\alpha_1)\]
    if and only if $\sigma=1$ or $\sigma=\prod_{i\in\mathcal{I}}s_i$ where $\mathcal{I}$ is a collection of nonconsecutive integers in the interval $[2,r-j_{n(I)}]$, and there are $F_{r-j_{n(I)}+1}$ many such subsets. Moveover, by \Cref{kim result} and \Cref{lemma: alphas get spit out}, $m_q(\hr_{r-j_{n(I)}+1},\alpha_1)$ can be expressed as,
    \begin{align}
        m_q(\hr_{r-j_{n(I)}+1},\alpha_1)&=\sum_{\sigma\in W}(-1)^{\ell(\sigma)}\qpartition{\sigma(\hr_{r-j_{n(I)}+1}+\rho_{r-j_{n(I)}+1})-\rho_{r-j_{n(I)}+1}-\alpha_1}\notag \\
        &= \sum_{\sigma\in\mathcal{A}_{r-j_{n(I)}+1}(\hr_{r-j_{n(I)}+1},\alpha_1)}(-1)^{\ell(\sigma)}\qpartition{\alpha_{2,r-j_{n(I)}+1}-\sum_{i\in\mathcal{I}}\alpha_i}.\label{right edge cleaned up}
    \end{align}
    Recall that for $\sigma_{n(I)}\in\thealtset$, $\sigma_{n(I)}=1$ or $\sigma_{n(I)}=\prod_{i\in\mathcal{I}_1}s_i$ where $\mathcal{I}_{n(I)}$ is a collection of nonconsecutive integers in the interval $[j_{n(I)}+1,r-1]$, and there are $F_{r-j_{n(I)}+1}$ many such subsets. Therefore, the sum in \Cref{pays the rent right edge case} contains a number of terms equivalent to  the size of the alternation set $\mathcal{A}_{r-j_{n(I)}+1}(\hr_{r-j_{n(I)}+1},\alpha_1)$.
    
    For the sum, we shift the indices on the simple roots appearing in the input of Kostant's partition function by $-j_{n(I)}+1$ to set the smallest index to $2$. Hence the sum in \Cref{pays the rent right edge case} can be expressed as,
    \begin{equation}
        \sum_{\substack{\sigma\in\mathcal{A}_r(\hr_r,\alpha_I)\\\sigma=\sigma_{n(I)}}}(-1)^{\ell(\sigma_{n(I)})}\qpartition{\alpha_{j_{n(I)}+1,r}-\sum_{i\in\mathcal{I}_{n(I)}}\alpha_i}=\sum_{\substack{\sigma\in\mathcal{A}_r(\hr_r,\alpha_I)\\\sigma=\sigma_{n(I)}}}(-1)^{\ell(\sigma_{n(I)})}\qpartition{\alpha_{2,r-j_{n(I)}+1}-\sum_{i\in\mathcal{I}'_{n(I)}}\alpha_i},\label{last sum cleaned up}
    \end{equation}
    where $\mathcal{I}'_{n(I)}$ denotes the set $\mathcal{I}_{n(I)}$ after the shift in indices.
    Now we show that every term in \Cref{last sum cleaned up} corresponds uniquely to a term in \Cref{right edge cleaned up}. First consider the term indexed by the identity element in the sum. By \Cref{lemma: rescaling q partition is cool},
    \begin{equation*}
        (-1)^{\ell(1)}\qpartition{\alpha_{2,r-j_{n(I)}+1}-0} = \qpartition{\alpha_{2,r-j_{n(I)}+1}}=q(q+1)^{r-j_{n(I)}-1}.
    \end{equation*}
    Likewise, the term indexed by the identity element in $m_q(\hr_{r-j_{n(I)}+1},\alpha_1)$ is also of the form
    \begin{equation*}
        (-1)^{\ell(1)}\qpartition{\alpha_{2,r-j_{n(I)}+1}-0} = \qpartition{\alpha_{2,r-j_{n(I)}+1}}=q(q+1)^{r-j_{n(I)}-1},
    \end{equation*}
    hence the terms indexed by the identity elements are of equal value. Now let $\sigma_{n(I)}\in\thealtset$ such that $\ell(\sigma_{n(I)})>0$. The corresponding term in \Cref{right edge cleaned up} is given by, 
    \begin{equation*}
        (-1)^{\ell(\sigma_{n(I)})}\qpartition{\alpha_{2,r-j_{n(I)}+1}-\sum_{i\in\mathcal{I}'_{n(I)}}\alpha_i},
    \end{equation*}
    with $\mathcal{I}'_{n(I)}$ a set of nonconsecutive integers in the interval $[2,r-j_{n(I)}+1]$. By \Cref{kim result}, this is equivalent to a unique $\sigma^*\in\mathcal{A}_{r-j_{n(I)}+1}(\hr_{r-j_{n(I)}+1},\alpha_1)$ using the same indices in $\mathcal{I}'_{n(I)}$ for the simple reflections as in $\sigma_{n(I)}$, but simple reflections in the Weyl group in rank $r-j_{n(I)}+1$. This implies 
    \begin{equation*}
        (-1)^{\ell(\sigma^*)}\qpartition{\alpha_{2,r-j_{n(I)}+1}-\sum_{i\in\mathcal{I}}\alpha_i}
        =(-1)^{\ell(\sigma_{n(I)})}\qpartition{\alpha_{2,r-j_{n(I)}+1}-\sum_{i\in\mathcal{I}'_{n(I)}}\alpha_i}.
    \end{equation*}
    Since our choice of non-identity element was arbitrary, indeed every term in \Cref{last sum cleaned up} corresponds to a unique term with nonzero output in \Cref{right edge cleaned up}. Therefore,
    \begin{equation*}
        \sum_{\substack{\sigma\in\mathcal{A}_r(\hr_r,\alpha_I)\\\sigma=\sigma_{n(I)}}}(-1)^{\ell(\sigma_{n(I)})}\qpartition{\alpha_{j_{n(I)}+1,r}-\sum_{i\in\mathcal{I}_{n(I)}}\alpha_i}=m_q(\hr_{r-j_{n(I)}+1},\alpha_1),
    \end{equation*}
    which proves the claim and completes the proof.
\end{proof}
At last we show the weight $q$-multiplicity $m_q(\hr,\alpha_I)$ for any nonempty $I\subseteq [r]$.
\THEBIGONE
\begin{proof} 
    Fix a nonempty $I\subseteq [r]$.
    By \Cref{thm:whats in the alt set}, $\sigma\in \thealtset$ if and only if $\sigma$ is of the form,
    \[\sigma=1 \quad \text{ or } \quad \sigma = \sigma_1\sigma_2\cdots \sigma_{n(I^c)}.\]
    For each $x\in [n(I^c)]$, $\mathcal{I}_x$ is a (possibly empty) set of nonconsecutive integers in the interval $[i'_x,j'_x]$, and $\sigma_x=\prod_{i\in\mathcal{I}_x}s_i$. Additionally, by \Cref{def:n(I^c)}, $n(I^c)\in\{n(I)-1,n(I),n(I)+1\}$ depending on the existence of $1$ or $r$ in the indexing set $I$. By \Cref{lemma: mu a sum of positive roots turns KWMF into a product},
    \begin{align}
        m_q(\hr,\alpha_I) &= \prod_{x=1}^{n(I^c)} \sum_{\substack{\sigma\in\thealtset\\ \sigma=\sigma_x}}(-1)^{\ell(\sigma_x)}\qpartition{\alpha_{i'_x,j'_x}-\sum_{i\in\mathcal{I}_x}\alpha_i}.\label{big result starting point}
    \end{align}
    To prove the claim, we manipulate \Cref{big result starting point} into a product of weight $q$-multiplicities in lesser ranks.  
    Then we utilize \Cref{THE BEST THEOREM EVER}, \Cref{thm: difference of q's}, and possibly \Cref{kim result}, to give the value of each weight $q$-multiplicity formula appearing in the product. More precisely, if $1$ or $r$ are not in $I$, then we use \Cref{kim result} in addition to \Cref{thm: difference of q's}. Thus, there are four cases to consider depending on the existence of $1$ or $r$ in the indexing set $I$. 
    \newline\noindent\textbf{Case 1:} Assume both $1,r\in I$: \\
    This implies that $\alpha_I$ contains the terms $\alpha_1$ and $\alpha_r$. Additionally, by \Cref{def:n(I^c)}, $n(I^c)=n(I)-1$ and 
    \begin{equation}
        I^c=\bigsqcup_{x=1}^{n(I)-1} [j_x+1,i_{x+1}-1].\label{interval for case 1}
    \end{equation}
    Therefore, in this case, we can state \Cref{big result starting point} as
    \begin{equation}
        m_q(\hr_r,\alpha_I) = \prod_{x=1}^{n(I)-1}\sum_{\substack{\sigma\in\mathcal{A}_r(\hr,\alpha_I)\\ \sigma=\sigma_x}}(-1)^{\ell(\sigma_x)}\qpartition{\alpha_{j_x+1,i_{x+1}-1}-\sum_{i\in\mathcal{I}_x}\alpha_i}.\label{big theorem case 1 start}
    \end{equation}
    By \Cref{THE BEST THEOREM EVER}, 
    \begin{equation}
        m_q(\hr_r,\alpha_I) = \prod_{x=1}^{n(I)-1}m_q(\hr_{i_{x+1}-j_x+1},\alpha_1+\alpha_{i_{x+1}-j_x+1})\notag
    \end{equation}
    and by \Cref{thm: difference of q's}, 
    \begin{equation}
        m_q(\hr_r,\alpha_I) 
        = \prod_{x=1}^{n(I)-1} q^{i_{x+1}-j_x-1}-q^{i_{x+1}-j_x-2}.\label{big theorem case 1 ALMOST DONE}
    \end{equation}
    We conclude this case by showing that \Cref{big theorem case 1 ALMOST DONE} is equivalent to \Cref{eq: KWMF final poly} as claimed:
    \begin{align}
        m_q(\hr,\alpha_I) &= \prod_{x=1}^{n(I)-1}q^{i_{x+1}-j_x-1}-q^{i_{x+1}-j_x-2} \notag \\
        &= \prod_{x=1}^{n(I)-1}q^{i_{x+1}-j_x-2}(q-1)\notag \\
        &= (q-1)^{n(I)-1}\cdot\prod_{x=1}^{n(I)-1}q^{i_{x+1}-j_x-2}\notag \\
        &= (q-1)^{n(I)-1}\cdot q^{\sum_{x=1}^{n(I)-1}i_{x+1}-j_x-2}\notag \\
        &= (q-1)^{n(I)-1}\cdot q^{-n(I)+1+\sum_{x=1}^{n(I)-1}i_{x+1}-j_x-1}.\notag
    \end{align}
    By \Cref{interval for case 1} and since $r=|I|+|I^c|$,
    \begin{equation}
        |I^c|=
        \sum_{x=1}^{n(I)-1}i_{x+1}-j_x-1,\notag
    \end{equation}
    which implies that
    \begin{equation}
        m_q(\hr_r,\alpha_I) = (q-1)^{n(I)-1}\cdot q^{r-|I|-n(I)+1}\notag
    \end{equation}
    as claimed. 
    \newline\noindent \textbf{Case 2:} Assume $1\notin I$ and $r\in I$. \\
    This implies that $\alpha_I$ contains the term $\alpha_r$, and the term with the smallest index is $\alpha_{i_1}$, with $i_1>1$. Additionally, by \Cref{def:n(I^c)}, $n(I^c)=n(I)$ and 
    \begin{equation}
        I^c = [1,i_1-1] \sqcup \bigsqcup_{x=1}^{n(I)-1}[j_x+1,i_{x+1}-1].\label{interval partition for case 2}
    \end{equation}
    Therefore, in this case, we can state \Cref{big result starting point} as,
    \begin{align}
        &m_q(\hr,\alpha_I) \notag\\
        &=\left(\sum_{\substack{\sigma\in\thealtset\\\sigma=\sigma_1}}(-1)^{\ell(\sigma_1)}\qpartition{\alpha_{1,i_1-1}-\sum_{i\in\mathcal{I}_1}\alpha_i}\right)\cdot \left(\prod_{x=2}^{n(I)}\sum_{\substack{\sigma\in\thealtset\\\sigma=\sigma_x}}(-1)^{\ell(\sigma_x)}\qpartition{\alpha_{j_x+1,i_{x+1}-1}-\sum_{i\in\mathcal{I}_x}\alpha_i}\right).\notag
    \end{align}
    By \Cref{THE BEST THEOREM EVER},
    \begin{align}
        m_q(\hr_r,\alpha_I) = m_q(\hr_{i_1},\alpha_{i_1})\cdot \prod_{x=2}^{n(I)}m_q(\hr_{i_{x+1}-j_x+1},\alpha_1+\alpha_{i_{x+1}-j_x+1}),\notag
    \end{align}
    and by \Cref{thm: difference of q's} and \Cref{kim result},
    \begin{align}
        m_q(\hr_r,\alpha_I) = q^{i_1-1}\cdot \prod_{x=1}^{n(I)-1}q^{i_{x+1}-j_x-1}-q^{i_{x+1}-j_x-2}.\label{Case 2 with best lemma used}
    \end{align}
    We conclude this case by showing \Cref{Case 2 with best lemma used} is equivalent to \Cref{eq: KWMF final poly}:
    \begin{align}
        q^{i_1-1}\cdot \prod_{x=1}^{n(I)-1}q^{i_{x+1}-j_x-1}-q^{i_{x+1}-j_x-2}&=q^{i_1-1}\cdot \prod_{x=1}^{n(I)-1}q^{i_{x+1}-j_x-2}\cdot(q-1) \notag\\
        &= q^{i_1-1}\cdot (q-1)^{n(I)-1}\cdot\prod_{x=1}^{n(I)-1}q^{i_{x+1}-j_x-2}\notag \\
        &= (q-1)^{n(I)-1}\cdot q^{i_1-1}\cdot q^{\sum_{x=1}^{n(I)-1} i_{x+1}-j_x-2}\notag\\
        &=(q-1)^{n(I)-1}\cdot q^{i_1-1-n(I)+1+\sum_{x=1}^{n(I)-1} i_{x+1}-j_x-1}\label{eq: case 2 exp clean up}.
    \end{align}
    Since $r=|I|+|I^c|$, by \Cref{interval partition for case 2},
    \begin{equation}
        |I^c| = i_1-1 + \sum_{x=1}^{n(I)-1} i_{x+1}-j_x-1
    \end{equation}
    which implies that,
    \begin{equation}
        m_q(\hr_r,\alpha_I) = (q-1)^{n(I)-1}\cdot q^{r-|I|-n(I)+1}\notag
    \end{equation}
    as claimed.
    \newline\noindent \textbf{Case 3:} Assume $1\in I$ and $r\notin I$. \\
    This implies that $\alpha_I$ contains the term $\alpha_1$, and the term with the largest index is $\alpha_{j_{n(I)}}$, with $j_{n(I)}<r$. Additionally, by \Cref{def: interval partition}, $n(I^c)=n(I)$ and
    \begin{equation}
        I^c = [j_{n(I)}+1,r-1] \sqcup \bigsqcup_{x=1}^{n(I)-1}[j_x+1,i_{x+1}-1].\label{interval partition for case 3}
    \end{equation}
    Therefore, in this case, we can state \Cref{big result starting point} as,
    \begin{align}
        m_q(\hr_r,\alpha_I)
        &= \left(\sum_{\substack{\sigma\in\mathcal{A}_{r}(\hr_r,\alpha_I) \\ \sigma = \sigma_{n(I)}}}(-1)^{\ell(\sigma_k)}\qpartition{\alpha_{j_{n(I)}+1,r}-\sum_{i\in\mathcal{I}_k}\alpha_i}\right)\notag\\
        &\hspace{3cm}\cdot \left(\prod_{x=1}^{n(I)-1}\sum_{\substack{\sigma\in\mathcal{A}_r(\hr_r,\alpha_I)\\ \sigma = \sigma_x}}(-1)^{\ell(\sigma_x)}\qpartition{\alpha_{j_x+1,i_{x+1}-1}-\sum_{i\in\mathcal{I}_x}\alpha_i}\right).\label{case 3 starting point}
    \end{align}
    By \Cref{THE BEST THEOREM EVER},
    \begin{align}
        m_q(\hr_r,\alpha_I)
        &= m_q(\hr_{r-j_{n(I)}+1},\alpha_1)\cdot \prod_{x=1}^{n(I)-1}m_q(\hr_{i_{x+1}-j_x+1},\alpha_1+\alpha_{i_{x+1}-j_x+1}),\notag
    \end{align}
    and by \Cref{thm: difference of q's} and \Cref{kim result},
    \begin{equation}
        m_q(\hr_r,\alpha_I) = q^{r-j_{n(I)}} \cdot \prod_{x=1}^{n(I)-1}q^{i_{x+1}-j_k-1}-q^{i_{x+1}-j_{n(I)}-2}.\label{Case 3 with best lemma used}
    \end{equation}
    We conclude this case by showing \Cref{Case 3 with best lemma used} is equivalent to \Cref{eq: KWMF final poly}:
    \begin{align}
        q^{r-j_{n(I)}} \cdot \prod_{x=1}^{n(I)-1}q^{i_{x+1}-j_k-1}-q^{i_{x+1}-j_{n(I)}-2}&=q^{r-j_{n(I)}} \cdot \prod_{x=1}^{n(I)-1}q^{i_{x+1}-j_{n(I)}-2}\cdot(q-1)\notag \\
        &= q^{r-j_{n(I)}}\cdot(q-1)^{n(I)-1}\cdot\prod_{x=1}^{n(I)-1}q^{i_{x+1}-j_x-2} \notag \\
        &=(q-1)^{n(I)-1} \cdot q^{r-j_{n(I)}+\sum_{x=1}^{n(I)-1}i_{x+1}-j_x-2} \notag\\
        &= (q-1)^{n(I)-1}\cdot q^{r-j_{n(I)}-n(I)+1+\sum_{x=1}^{n(I)-1}i_{x+1}-j_x-1}.\label{eq: case 3 so so close}
    \end{align}
    Since $r=|I|+|I^c|$, by \Cref{interval partition for case 3}, 
    \begin{equation}
        |I^c| = r-j_{n(I)} - \sum_{x=1}^{n(I)-1} i_{x+1}-j_x-1,
    \end{equation}
    which implies that
    \begin{equation}
        m_q(\hr_{r},\alpha_I) = (q-1)^{n(I)-1}\cdot q^{r-|I|-n(I)+1}\notag
    \end{equation}
    as claimed.
    \newline\noindent \textbf{Case 4:} Assume both $1,r\notin I$. \\
    This implies that in $\alpha_I$ the term with the smallest index is $\alpha_{i_1}$, with $i_1>1$, and the term with the largest index is $\alpha_{j_{n(I)}}$, with $j_{n(I)}<r$. Additionally, by \Cref{def: interval partition}, $n(I^c)=n(I)+1$ and 
    \begin{align}
        I^c = [1,i_1-1] \sqcup [j_{n(I)}+1,r-1] \sqcup \bigsqcup_{x=1}^{n(I)-1}[j_x+1,i_{x+1}-1].\label{interval partition for case 4}
    \end{align}
    Therefore, in this case, we can state \Cref{big result starting point} as,
    \begin{align}
        &m_q(\hr_r,\alpha_I)=\left(\sum_{\substack{\sigma\in\thealtset\\\sigma=\sigma_1}}(-1)^{\ell(\sigma_1)}\qpartition{\alpha_{1,i_1-1}-\sum_{i\in\mathcal{I}_1}\alpha_i}\right)\notag\\
        &\hspace{3cm}\cdot\left(\sum_{\substack{\sigma\in\mathcal{A}_{r}(\hr_r,\alpha_I) \\ \sigma = \sigma_{n(I)+1}}}(-1)^{\ell(\sigma_{n(I)+1})}\qpartition{\alpha_{j_{n(I)}+1,r}-\sum_{i\in\mathcal{I}_{n(I)}}\alpha_i}\right)\notag \\
        &\hspace{3cm}\cdot \left(\prod_{x=2}^{n(I)}\sum_{\substack{\sigma\in\mathcal{A}_r(\hr_r,\alpha_I)\\ \sigma = \sigma_x}}(-1)^{\ell(\sigma_x)}\qpartition{\alpha_{j_x+1,i_{x+1}-1}-\sum_{i\in\mathcal{I}_x}\alpha_i}\right).\label{case 4 starting point}
    \end{align}
    By \Cref{THE BEST THEOREM EVER},
    \begin{align}
        m_q(\hr_r,\alpha_I)=m_q(\hr_{i_1},\alpha_{i_1})\cdot m_q(\hr_{r-j_{n(I)}+1},\alpha_1)\cdot  \prod_{x=1}^{n(I)-1}m_q(\hr_{i_{x+1}-j_x+1},\alpha_1+\alpha_{i_{x+1}-j_x+1}),
    \end{align}
    so by \Cref{thm: difference of q's} and \Cref{kim result}, 
    \begin{equation}
        m_q(\hr_r,\alpha_I) = q^{i_1-1}\cdot q^{r-j_{n(I)}}\cdot \prod_{x=1}^{n(I)-1}q^{i_{x+1}-j_x-1}-q^{i_{x+1}-j_x-2}.\label{case 4 wrap it up}
    \end{equation}
    We conclude by showing \Cref{case 4 wrap it up} is equivalent to \Cref{eq: KWMF final poly}:
    \begin{align}
        q^{i_1-1}\cdot q^{r-j_{n(I)}} \cdot \prod_{x=1}^{n(I)-1}q^{i_{x+1}-j_x-1}-q^{i_{x+1}-j_x-2}
        &=q^{i_1-1}\cdot q^{r-j_{n(I)}} \cdot \prod_{x=1}^{n(I)-1}q^{i_{x+1}-j_x-2}\cdot (q-1)\notag\\
        &= (q-1)\cdot q^{i_1-1+r-j_{n(I)}+\sum_{x=1}^{n(I)-1}i_{x+1}-j_x-2} \notag\\
        &= (q-1)^{n(I)-1}\cdot q^{i_1-1+r-j_{n(I)}-n(I)+1+\sum_{x=1}^{n(I)-1}i_{x+1}-j_x-1}.\notag\label{eq: IT'S FINALLY OVER}
    \end{align}
    Since $r=|I|+|I^c|$, by \Cref{interval partition for case 4}
    \begin{equation}
        |I^c| = i_1-1+r-j_{n(I)} + \sum_{x=1}^{n(I)-1}i_{x+1}-j_x-1,
    \end{equation}
    which implies that
    \begin{equation}
        m_q(\hr_r,\alpha_I) = (q-1)^{n(I)-1}\cdot q^{r-|I|-n(I)+1}\notag
    \end{equation}
    as claimed, which completes the proof.
\end{proof}
\section*{Acknowledgements}
The author would like to thank Alex Wilson for sharing the problem and for their guidance in directions for future study. The author would also like to thank Kimberly J. Harry for sharing their guidance in the beginning drafts of this work. Lastly, the author would like to thank their advisor Pamela E. Harris for introducing the author to Kostant's Weight Multiplicity Formula.
\bibliographystyle{plain}
\bibliography{bibliography.bib}
\end{document}